\documentclass[a4paper,11pt]{amsart}
\usepackage[matrix,arrow,tips,curve]{xy}
\usepackage[english]{babel}
\usepackage{amsmath,stackrel}
\usepackage{amssymb}
\usepackage{tikz}
\usepackage{mathrsfs}
\usepackage{enumerate}
\usepackage{graphicx}
\usepackage{hyperref}
\usetikzlibrary{trees}
\usetikzlibrary{arrows}
\usepackage{xcolor}

\oddsidemargin = 0.0mm
\evensidemargin = 0.0mm
\topmargin = 0.0mm
\textheight = 248mm
\textwidth = 170mm
\voffset = -5mm
\hoffset = -5.3mm

\newtheorem{thm}{Theorem}[section]
\newtheorem{Lemma}[thm]{Lemma}
\newtheorem{Proposition}[thm]{Proposition}
\newtheorem{Corollary}[thm]{Corollary}

\newtheorem*{thm*}{Theorem}

\theoremstyle{definition}

\newtheorem{Remark}[thm]{Remark}

\newtheorem{say}[thm]{}

\newcommand{\Aut}{\operatorname{Aut}}

\newcommand{\cA}{\mathcal{A}} 
\newcommand{\cC}{\mathcal{C}} 
\newcommand{\cL}{\mathcal{L}} 
\renewcommand{\O}{\mathcal{O}} 
\renewcommand{\P}{\mathbb{P}}
\newcommand{\Z}{\mathbb{Z}}
\newcommand{\bV}{\mathbb{V}}
\newcommand{\Q}{\mathbb{Q}} 
\newcommand{\cM}{\mathcal{M}}

\DeclareMathOperator{\Pic}{Pic}

\DeclareMathOperator{\Kum}{Kum}
\DeclareMathOperator{\Jac}{Jac}

\DeclareMathOperator{\Eff}{Eff}

\DeclareMathOperator{\tr}{tr}

\DeclareMathOperator{\Sing}{Sing}

\DeclareMathOperator{\SPEnd}{SPEnd}

\DeclareMathOperator{\R}{\mathbb{R}}
\DeclareMathOperator{\C}{\mathbb{C}}

\DeclareMathOperator{\Mov}{Mov}

\hypersetup{pdfpagemode=UseNone}
\hypersetup{pdfstartview=FitH}
		
\setcounter{tocdepth}{1}		
				
\begin{document}
\title{On automorphisms of moduli spaces of parabolic vector bundles}

\author[Carolina Araujo]{Carolina Araujo}
\address{\sc Carolina Araujo\\
IMPA, Estrada Dona Castorina 110, 22460-320 Rio de Janeiro, Brazil}
\email{caraujo@impa.br}

\author[Thiago Fassarella]{Thiago Fassarella}
\address{\sc Thiago Fassarella\\
Universidade Federal Fluminense, Rua Alexandre Moura 8 - S\~ao Domingos, 24210-200 Niter\'oi, Rio de Janeiro, Brazil}
\email{tfassarella@id.uff.br}

\author[Inder Kaur]{Inder Kaur}
\address{\sc Inder Kaur\\
IMPA, Estrada Dona Castorina 110, 22460-320 Rio de Janeiro, Brazil}
\email{inder@impa.br}

\author[Alex Massarenti]{Alex Massarenti}
\address{\sc Alex Massarenti\\ Dipartimento di Matematica e Informatica, Universit\`a di Ferrara, Via Machiavelli 30, 44121 Ferrara, Italy\newline
\indent Instituto de Matem\'atica e Estat\'istica, Universidade Federal Fluminense, Campus Gragoat\'a, Rua Alexandre Moura 8 - S\~ao Domingos\\
24210-200 Niter\'oi, Rio de Janeiro\\ Brazil}
\email{alex.massarenti@unife.it, alexmassarenti@id.uff.br}

\date{\today}
\subjclass[2010]{Primary 14D20, 14H37, 14J10; Secondary 14J45, 14E30}
\keywords{Moduli of parabolic bundles, Mori dream spaces, Fano varieties, automorphisms}

\begin{abstract}
Fix $n\geq 5$ general points $p_1, \dots, p_n\in \P^1$, and a weight vector $\cA = (a_{1}, \dots, a_{n})$ of real numbers $0 \leq a_{i} \leq 1$. 
Consider the moduli space $\mathcal{M}_{\cA}$  parametrizing rank two parabolic vector bundles with trivial determinant on $\big(\P^1, p_1,\dots , p_n\big)$ which are semistable with respect to $\cA$.
Under some conditions on the weights, we determine and give a modular interpretation for the automorphism group of the moduli space $\mathcal{M}_{\cA}$. 
It is isomorphic to $\left(\frac{\mathbb{Z}}{2\mathbb{Z}}\right)^{k}$ for some $k\in \{0,\dots, n-1\}$, and is generated by admissible elementary transformations of parabolic vector bundles. The largest of these automorphism groups, with $k=n-1$, occurs 
for the central weight $\cA_{F}= \left(\frac{1}{2},\dots,\frac{1}{2}\right)$. The corresponding moduli space ${\mathcal M}_{\cA_F}$ is a Fano variety of dimension $n-3$, which is smooth if $n$ is odd, and has isolated singularities if $n$ is even. 
\end{abstract}

\maketitle
\tableofcontents

%
%

\section{Introduction}
Let $C$ be a smooth projective curve and fix distinct points $p_1, \dots, p_n\in C$, which we refer to as parabolic points. Let $S=p_1+\cdots +p_n$ be the effective reduced divisor determined by these points. 
A \emph{quasi parabolic vector bundle} on $\big(C, S\big)$ is a vector bundle $E$ on $C$ with the additional data of a flag on the fiber over each parabolic point. If in addition we attach some weights to theses flags we call it a \emph{parabolic vector bundle}. Parabolic vector bundles were introduced by Mehta and Seshadri (\cite{Se77}, \cite{MS80}) in order to generalize to curves with cusps the Narasimhan-Seshadri correspondence between stable vector bundles on smooth projective curves and unitary representations of their fundamental groups (\cite{NS65}). 
As in the classical case, once one fixes a line bundle $L$ and a notion of slope-stability, there is a moduli space of semistable parabolic vector bundles having determinant $L$. The notion of slope-stability depends on sets of weights assigned to the parabolic flags.  Different choices of weights usually yield different moduli spaces, coming from variation of GIT.

There is one case in which the theory has been extensively investigated, and the different moduli spaces are well described. This is the case when $C\cong \P^1$ is the complex projective line, the vector bundles have rank $2$ and the flags are given by parabolic directions $V_i\subset E_{p_i}$ over each parabolic point. In this case, we may assume that the  vector bundles have trivial determinant, and the slope-stability condition depends on the choice of a weight  vector $\cA = (a_{1}, \dots, a_{n})$ of real numbers $0 \leq a_{i} \leq 1$ (see Section \ref{MPB}). We denote by $\mathcal{M}_{\cA}$ the corresponding moduli space of semistable parabolic vector bundles.

The goal of this paper is to determine and give a modular interpretation of the automorphism groups of the moduli spaces 
$\mathcal{M}_{\cA}$. Descriptions of automorphisms of moduli spaces in terms of the objects that they parametrize were obtained in many cases. See for instance \cite{BM13, Ma, MaM14, MaM, FM17, Ma17, BM17, FM18, Lin11, Ro71}, for moduli spaces of pointed curves and other configuration spaces, \cite{BGM13} for moduli spaces of vector bundles over curves, and \cite{BM16} for generalized quot schemes.

In \cite{Ba91}, Bauer described the weight polytope $\Delta\subset [0,1]^{n}$ consisting of weight vectors $\cA$ for which $\mathcal{M}_{\cA}\neq \emptyset$. 
He also exhibited a wall-and-chamber decomposition on $\Delta$ corresponding to the variation of GIT for the moduli spaces $\mathcal{M}_{\cA}$, and described the birational maps between models corresponding to different chambers. 
The weight polytope $\Delta$ is the polytope generated by the even vertices of the hypercube $[0,1]^{n}$, where the parity of
a vertex is the parity of the set of its coordinates that equal $1$. This polytope  is called \emph{demi-hypercube}. It is the weight polytope for the root system of $D_n$, and its symmetry group 
$$
\Aut(\Delta) \ \cong \ W(D_{n}) \ \cong  \ \big( \Z/2\Z\big)^{n-1} \rtimes S_{n}
$$
is generated by reflections along pairs of coordinate axes centered at the middle point $\left(\frac{1}{2},\dots, \frac{1}{2}\right)$, and 
permutations of the $n$ coordinate axes. 

\begin{say}{Elementary transformations.}\label{el}
The normal subgroup $\big( \Z/2\Z\big)^{n-1}\lhd \ \Aut(\Delta)$ of reflections admits a modular realization as 
a group of \emph{elementary transformations}, which we now describe. 
Let $\big(E, {\bf v}\big)$ be a rank $2$ quasi parabolic vector bundle on $\big( \P^1, S \big)$ of degree $0$,
and let $R\subset \{1, \dots, n\}$ be a subset of cardinality $r\geq 0$.
Identifying vector bundles with their associated locally free sheaves, we consider the natural exact sequence of sheaves
$$
0\ \to \ E' \ \stackrel{\alpha}{\to} \  E\ \to \ \bigoplus_{i\in R}(E_{p_i}/V_i)\otimes \O_{p_i} \ \to \ 0 \ .
$$
Note that we have the following equality
\[
\det E' = \det E \otimes \mathcal O_{\mathbb P^1}\big(-\sum_{i\in R}p_i\big).
\]
In particular, $E'$ is a vector bundle of rank $2$ and degree $-r$. We view $E'$ as a quasi parabolic vector bundle on $\big(\P^1, S\big)$ as follows. If $i\not\in R$, then $\alpha_{p_i}: E'_{p_i}\longrightarrow E_{p_i}$ is an isomorphism and 
$$V_i'=(\alpha_{p_i})^{-1} (V_i)\subset E'_{p_i}$$ is the parabolic direction at $p_i$. If $i\in R$, then $V_i'=\ker(\alpha_{p_i})$ is the parabolic direction at $p_i$. 
This operation corresponds to the birational transformation of ruled surfaces $\P(E)\dashrightarrow \P(E')$ obtained
by blowing-up the points $\mathbb{P}(V_i)\in\mathbb{P}(E_{p_i})$ and then blowing-down the strict transforms of the fibers $\mathbb{P}(E_{p_i})$  to the points $\mathbb{P}(V'_i)\in\mathbb{P}(E'_{p_i})$, $i\in R$. 
When $r$ is even, we obtain a correspondence 
$$
el_R \ : \ \big(E, {\bf v}\big) \ \mapsto \ \Big(E'\otimes \O_{\P^1}(r/2), \{V'_i\}\Big)
$$
between rank $2$ quasi parabolic vector bundles on $\big(\P^1, S\big)$ of degree $0$.
We call it the \emph{elementary transformation centered at the parabolic points} $\{p_i\}_{i\in R}$. 
Note that $el_R$ is not the identity unless $R=\emptyset$.
Elementary transformations are involutions and $el_R\circ el_S=el_{R\cup S\setminus R\cap S}$. 
So they form a group 
$$
\textbf{El}\ = \ \big\{ el_R \ \big| \  R\subset \{1, \dots, n\} \text{ of even cardinality}\big\}\cong\left(\frac{\mathbb{Z}}{2\mathbb{Z}}\right)^{n-1}.
$$

When we perform an elementary transformation, the stability condition is preserved after an appropriate 
modification of weights. For a weight vector $\cA = (a_{1}, \dots, a_{n})\in \Delta$ and a subset $R\subset \{1, \dots, n\}$ of even cardinality, 
we set 
\[
\cA^R := (a'_{1}, \dots, a'_{n})\in \Delta
\]
where $a'_i=a_i$ if $i\not\in R$, and $a'_i=1-a_i$ if $i\in R$. If $\big(E, {\bf v}\big)$ is semistable with respect to $\cA$, then $el_R\big(E, {\bf v}\big)$ is semistable with respect to $\cA^R$. This follows from the following observation. If  $L\subset E$ is a line bundle, then its image $L'\subset E'$ is 
$$
L' = L\otimes \mathcal O_{C}(-D) 
$$
where $D$ is the reduced divisor supported on the points $p_i$ such that $V_i\nsubseteq L$. We conclude that the correspondence $el_R$ defines an isomorphism between moduli spaces 
$$
el_R: \mathcal{M}_{\cA} \longrightarrow \mathcal M_{\cA^R}.
$$
\end{say}

The moduli space $\mathcal M_{\cA_F}$ associated to the central weight $\cA_{F} = \left(\frac{1}{2},\dots, \frac{1}{2}\right)$ is specially interesting.  It is a Fano variety of dimension $n-3$ that is smooth if $n$ is odd, and has isolated singularities if $n$ is even. 
If follows from the above discussion that $el_R$ induces an automorphism of $\mathcal M_{\cA_F}$ for every subset $R\subset \{1, \dots, n\}$ of even cardinality. In other words, we have:
$$
\left(\frac{\mathbb{Z}}{2\mathbb{Z}}\right)^{n-1}\ \cong \ \textbf{El}\ \subset\ \Aut\big(\mathcal M_{\cA_F}\big).
$$

Our first result is the following.

\begin{thm}\label{main_intro}
Fix $n\geq 5$ general points $p_1, \dots, p_n\in \P^1$ and let $\mathcal M_{\cA_F}$ be the moduli space of rank two parabolic vector bundles with trivial determinant on $\big(\P^1, S\big)$  wich are semistable with respect to the weight vector $\cA_{F} = \left(\frac{1}{2},\dots,\frac{1}{2}\right)$. Then
$$
\left(\frac{\mathbb{Z}}{2\mathbb{Z}}\right)^{n-1}\ \cong \ \emph{\textbf{El}}\ =\ \Aut\big(\mathcal M_{\cA_F}\big).
$$
\end{thm}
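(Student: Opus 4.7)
The containment $\textbf{El}\subseteq \Aut(\mathcal M_{\cA_F})$ was established in the discussion preceding the statement, so it suffices to prove the reverse inclusion. The strategy is to construct a natural homomorphism
\[
\rho\colon \Aut(\mathcal M_{\cA_F})\longrightarrow \Aut(\Delta)\cong W(D_n)\cong (\Z/2\Z)^{n-1}\rtimes S_n,
\]
to show that its image lies in the sign-change subgroup $(\Z/2\Z)^{n-1}$, and finally to show that $\rho$ is injective, so that the map $\textbf{El}\to\Aut(\mathcal M_{\cA_F})$ is already surjective.

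Since $\mathcal M_{\cA_F}$ is a Fano variety, and in particular a Mori dream space, any automorphism $\phi$ preserves the anticanonical class and acts on $\N^1(\mathcal M_{\cA_F})$ respecting the nef, movable and pseudoeffective cones, and hence the full Mori chamber decomposition. The first step is to identify the walls adjacent to the Mori chamber corresponding to $\cA_F$ with Bauer's walls of the polytope $\Delta$ meeting at $\cA_F$, which are canonically indexed by subsets $I\subset\{1,\dots,n\}$. Showing that $\phi$ permutes these labels compatibly with their combinatorial structure yields the homomorphism $\rho$, and one verifies directly that $\rho$ restricted to $\textbf{El}$ is the standard inclusion $(\Z/2\Z)^{n-1}\hookrightarrow W(D_n)$.

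Next, the plan is to argue that the projection of $\operatorname{Im}(\rho)$ to $S_n$ is trivial. Any nontrivial $\sigma\in S_n$ in the image would have to be induced by an automorphism of $(\P^1, p_1+\cdots+p_n)$ permuting the parabolic points according to $\sigma$; for general $p_1,\dots,p_n$ with $n\geq 5$ no such automorphism of $\P^1$ exists, so this forces $\operatorname{Im}(\rho)\subseteq (\Z/2\Z)^{n-1}$. Composing any $\phi$ with the appropriate element $el_R\in\textbf{El}$ then reduces the problem to showing that every $\phi\in\ker\rho$ is the identity.

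The main obstacle is this injectivity statement. An automorphism trivial on every Bauer wall fixes a spanning set of divisor classes, so acts trivially on $\N^1(\mathcal M_{\cA_F})$, and in particular preserves an ample polarization together with every contraction of the Mori chamber decomposition. The plan is to promote this cohomological triviality into a pointwise rigidity using the modular interpretation: $\phi$ must preserve each natural invariant subvariety of $\mathcal M_{\cA_F}$ coming from the parabolic structure (for instance the Hecke-type loci of bundles with prescribed direction at a fixed $p_i$, as well as the images in $\mathcal M_{\cA_F}$ of the neighbouring moduli spaces under wall-crossing birational maps). Proving that these rigid loci fit together in a configuration which determines a closed point of $\mathcal M_{\cA_F}$, and hence forces $\phi=\mathrm{id}$, is the technical heart of the argument and the step where the interplay between the Mori dream space geometry and the parabolic modular structure does the essential work.
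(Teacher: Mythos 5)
Your strategy is genuinely different from the paper's. You propose to study the representation $\rho\colon \Aut(\mathcal M_{\cA_F})\to W(D_n)$ on $N^1(\mathcal M_{\cA_F})$ and its Mori chamber decomposition, kill the $S_n$-part, and prove injectivity; this is close in spirit to the argument of \cite{AC17} for $n$ odd. The paper instead never introduces such a homomorphism: it lifts $\varphi$ to $d\varphi$ on $T^{*}\mathcal M_{\cA_F}^{s}$, uses that the Hitchin map is the affinization of $T^{*}\mathcal M_{\cA_F}^{s}$ to get an induced linear map on ${\rm H}^0(\mathbb{P}^1,\omega_{\mathbb{P}^1}^{\otimes 2}(S))$, applies the Torelli theorem to the spectral curves together with their natural involutions to show this map is a scalar (this is where generality of the $p_i$ enters), and then reconstructs the blow-up of $\P(E)$ at the parabolic directions as the projectivized nilpotent cone, concluding that $\varphi$ coincides with some $el_R$ at a general point and hence everywhere.

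As written, your proposal has two genuine gaps. First, and most seriously, the injectivity of $\rho$ is not proved but only wished for. You would need to show that an automorphism acting trivially on $N^1(\mathcal M_{\cA_F})$ is the identity; this requires at least that $\Aut^0(\mathcal M_{\cA_F})$ is trivial (e.g.\ vanishing of ${\rm H}^0$ of the tangent sheaf, which is not automatic for a Fano variety) and then a genuine pointwise rigidity argument for the discrete part of the kernel. The assertion that the invariant loci ``fit together in a configuration which determines a closed point'' is a programme, not an argument, and it is exactly at this step that some modular input (in the paper, the Hitchin system and the nilpotent cone) appears to be unavoidable. Second, the claim that a nontrivial $S_n$-component of $\rho(\varphi)$ must be induced by an automorphism of $\big(\P^1,p_1+\cdots+p_n\big)$ is asserted without justification: a priori $\varphi$ only permutes abstract walls labelled by $\{1,\dots,n\}$, and to convert this into an automorphism of $\P^1$ moving the actual points you must first show that the points (say, their cross-ratios) are recoverable intrinsically from $\mathcal M_{\cA_F}$ with its labelled wall structure --- a Torelli-type statement comparable in difficulty to the theorem itself. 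Note also that for $n$ even $\mathcal M_{\cA_F}$ is singular, a case not covered by \cite{AC17}, so the Mori-theoretic formalism you invoke would need to be verified there as well. Until these two steps are supplied the proof is incomplete.
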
 

We remark that for $n$ odd, the isomorphism 
$$ 
\Aut\big(\mathcal{M}_{\cA_F}\big)\cong \left(\frac{\mathbb{Z}}{2\mathbb{Z}}\right)^{n-1}
$$ 
was proved in \cite[Proposition 1.9]{AC17}, without the modular description as elementary transformations. 
For $n=5$, the moduli space $\mathcal M_{\cA_F}$ is isomorphic to a del Pezzo surface of degree four and its automorphism group is classically known (\cite[Section 8.6.4]{Do12}).

For an arbitrary weight $\cA\in \Delta$, let $\cC_\cA$ denote the subset of $\Delta$ consisting of weight vectors defining the same 
stability condition as $\cA$. It can be explicitly read off from Bauer's wall-and-chamber decomposition on $\Delta$.
Consider the subgroup of \emph{$\cA$-admissible elementary transformations}:
$$
\textbf{El}_{\cA}\ = \ \big\{ el_R \in \textbf{El} \ \big| \ \cA^R\in \cC_\cA \ \big\}\ \subset \ \textbf{El}.
$$
Then 
\stepcounter{thm}
\begin{equation}\label{El_in_Aut}
\textbf{El}_{\cA}\ \subset \ \Aut\big(\mathcal{M}_{\cA}\big). 
\end{equation}
In general one does not have equality in \eqref{El_in_Aut}. 
For instance, there are weight vectors $\cA\in \Delta$ for which $\mathcal{M}_{\cA}\cong \P^n$
(see \cite{Ba91} and Section~\ref{bir}).
However, there is an open sub-polytope of $\Delta$ for which equality in \eqref{El_in_Aut} holds. It can be described as follows. 
For every vertex $v$ of $\Delta$, let $H_v\subset \R^n$ be the hyperplane spanned by those vertices of $\Delta$ that are adjacent to $v$. Let $\Pi$ be the sub-polytope obtained from $\Delta$ by chopping off each vertex $v$ of $\Delta$ with the hyperplane $H_v$ 
(see Section~\ref{bir}). It contains in its interior the subset $\cC_{\cA_F}$ consisting of weight vectors defining the same stability condition as $\cA_F$.

\begin{Corollary}\label{allweights}
Fix $n\geq 5$ general points $p_1, \dots, p_n\in \P^1$ and let $\cA$ be a weight vector in the interior of the polytope $\Pi$ defined above. Let $\mathcal{M}_{\cA}$ be the moduli space of rank two parabolic vector bundles with  trivial determinant on $\big(\P^1, S\big)$  which are semistable with respect to the weight vector $\cA$. Then
$$
{\emph{\textbf{El}}}_{\cA}\ = \ \Aut\big(\mathcal{M}_{\cA}\big). 
$$
\end{Corollary}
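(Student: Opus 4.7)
The plan is to compare $\mathcal{M}_{\cA}$ with the central model $\mathcal{M}_{\cA_F}$ through the variation of GIT across Bauer's wall-and-chamber decomposition of $\Delta$, and then to invoke Theorem~\ref{main_intro}. First I would choose a generic path in $\Delta$ joining $\cA_F$ to $\cA$ and crossing walls transversally. This path gives a birational map $\varphi:\mathcal{M}_{\cA_F}\dashrightarrow \mathcal{M}_{\cA}$ as a composition of wall-crossing birational transformations. The key claim is that \emph{for $\cA$ in the interior of $\Pi$, every wall encountered along the path is a flipping wall}, so that $\varphi$ is an isomorphism in codimension one. This is precisely the reason for chopping off the vertices of $\Delta$ with the hyperplanes $H_v$: these $H_v$ are exactly the divisorial walls, beyond which (between $H_v$ and the vertex $v$) a divisor of the relevant moduli space gets contracted and the Picard number drops, so the nearby model degenerates (e.g.\ to $\P^{n-3}$, see Section~\ref{bir}). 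Establishing this claim via a case analysis of the walls in Bauer's decomposition is, in my view, the main technical obstacle.

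Granted the small-modification property of $\varphi$, conjugation by $\varphi$ yields an isomorphism
$$
\PsAut\big(\mathcal{M}_{\cA_F}\big)\ \cong\ \PsAut\big(\mathcal{M}_{\cA}\big).
$$
Since $\mathcal{M}_{\cA_F}$ is a $\Q$-factorial Fano variety, the anticanonical class lies in the interior of its nef cone and is preserved by every pseudo-automorphism, which therefore preserves the ample chamber and must be a genuine automorphism. Hence $\PsAut(\mathcal{M}_{\cA_F})=\Aut(\mathcal{M}_{\cA_F})$, and Theorem~\ref{main_intro} identifies this group with $\textbf{El}$. Consequently
$$
\Aut\big(\mathcal{M}_{\cA}\big)\ \subseteq\ \PsAut\big(\mathcal{M}_{\cA}\big)\ \cong\ \textbf{El}.
$$

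It remains to characterise which elements $el_R\in\textbf{El}$, viewed through $\varphi$, extend to genuine automorphisms of $\mathcal{M}_{\cA}$. By the construction in~\ref{el}, $el_R$ defines an isomorphism $\mathcal{M}_{\cA}\to\mathcal{M}_{\cA^R}$, so $\varphi^{-1}\circ el_R\circ\varphi$ is an automorphism of $\mathcal{M}_{\cA}$ precisely when $\mathcal{M}_{\cA^R}=\mathcal{M}_{\cA}$ as moduli spaces, that is, when $\cA^R\in\cC_\cA$, i.e.\ when $el_R\in\textbf{El}_\cA$. Combined with the inclusion~\eqref{El_in_Aut} already recorded in the paper, this yields the desired equality $\Aut(\mathcal{M}_{\cA})=\textbf{El}_\cA$.
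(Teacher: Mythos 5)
Your proposal is correct and follows essentially the same route as the paper: pass to the Fano model $\mathcal{M}_{\cA_F}$ via the small modification, use that a pseudo-automorphism of a Fano variety preserves the ample anticanonical class and is therefore biregular, apply Theorem~\ref{main_intro}, and then observe that $el_R$ stays biregular on $\mathcal{M}_{\cA}$ exactly when $\cA^R\in\cC_\cA$. The "main technical obstacle" you flag — that every model for $\cA$ in the interior of $\Pi$ is a small modification of $\mathcal{M}_{\cA_F}$ — does not require a fresh wall-by-wall analysis: it is already supplied in Paragraph~\ref{polys} by Mukai's identification of the wall-and-chamber decomposition of $\Delta$ with the Mori chamber decomposition of $\Eff(X^{n-3}_n)$, under which $\Pi$ is the image of the movable cone.
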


The polytope $\Pi$ has a natural description from the point of view of birational geometry. 
Namely, for weights $\cA$ in the interior of the polytope $\Pi$, the moduli spaces $\mathcal{M}_{\cA}$  are small modifications of the Fano variety $\mathcal M_{\cA_F}$.

\medskip

This paper is organized as follows. 
In Section~\ref{MPB}, we revise basic properties of moduli spaces of parabolic vector bundles, Hitchin systems and spectral curves.
This theory is used in Section~\ref{main_section}  to prove Theorem~\ref{main_intro}. 
In Section~\ref{bir}, we use birational geometry and the small equivalence of models to prove Corollary~\ref{allweights}.
In Section \ref{hyp}, we describe the automorphism group of moduli spaces of involutional vector bundles on hyperelliptic curves by relating them to $\mathcal{M}_{\cA}$.
\medskip

\noindent {\bf Acknowledgements.}
Carolina Araujo was partially supported by CNPq and Faperj Research Fellowships. Thiago Fassarella was partially supported by CNPq. Inder Kaur was supported by a CNPq post-doctoral fellowship. Alex Massarenti is a member of the Gruppo Nazionale per le Strutture Algebriche, Geometriche e le loro Applicazioni of the Istituto Nazionale di Alta Matematica "F. Severi" (GNSAGA-INDAM). Part of this work was developed during the visit of some of the authors to ICTP, funded by Carolina Araujo's ICTP Simons Associateship. We thank ICTP and Simons Foundation for the great working conditions and the financial support.

%
%

\section{Moduli spaces of parabolic vector bundles on $\P^1$}\label{MPB}

Fix  $p_1, \dots, p_n\in \P^1$ general points and denote by $S=p_1+\cdots +p_n$ the effective reduced divisor determined by them.

\begin{say}{\bf Quasi parabolic vector bundles.}
A \textit{quasi parabolic vector bundle} $(E, {\bf v})$, ${\bf v} = \{V_{i}\}$,  of rank two  on $\big(\P^1, S\big)$ consists  of
\begin{itemize}
\item[-] a vector bundle $E$ of rank two on $\P^{1}$; and
\item[-] for each  $i = 1,\dots,n$, a $1$-dimensional linear subspace $V_{i} \subset E_{p_{i}}$.
\end{itemize}
By abuse of notation we often write $E$ for $(E, {\bf v})$. 
We refer to the points $p_i$'s as parabolic points, and to the supspace $V_{i} \subset E_{p_{i}}$ as the parabolic direction of $E$ at $p_i$. 

Let $(E, {\bf v})$ and $(E', {\bf v}')$ be quasi parabolic vector bundles. A homomorphism of vector bundles $f : E \longrightarrow E'$ is called \textit{parabolic} if $f(V_{i}) \subseteq V'_i$ for every $i = 1,\dots,n$. 
It is called \textit{strongly parabolic} if $f(E_{p_i}) \subseteq V'_i$ and $f(V_i)=0$ for every $i = 1,\dots,n$. 
We denote by $\mathcal{PH}om(E,E')$ and $\mathcal{SPH}om(E,E')$ the sheaves of parabolic and strongly parabolic homomorphisms, by $\mathcal{PE}nd(E):=\mathcal{PH}om(E,E)$ and $\mathcal{SPE}nd(E):=\mathcal{SPH}om(E,E)$ the sheaves of parabolic and strongly parabolic endomorphisms of $(E, {\bf v})$, and by $\mathcal{PE}nd_0(E)$ and $\mathcal{SPE}nd_0(E)$ their subsheaves of traceless endomorphisms.

By taking the trace of the product of two endomorphisms, one defines symmetric $\O_{\P^1}$-bilinear  sheaf homomorphisms
$$
\mathcal{E}nd(E)\times \mathcal{E}nd(E) \to \O_{\P^1} \ \ \text{and} \ \ \mathcal{E}nd_0(E)\times \mathcal{E}nd_0(E) \to \O_{\P^1}.
$$
A simple linear algebra computation then yields the following parabolic dualities
\begin{equation}\label{duality}
\mathcal{PE}nd(E)^{\vee} \ \cong \ \mathcal{SPE}nd(E)\otimes \O_{\P^1}(S)  \ \ \text{and} \ \  
\mathcal{PE}nd_0(E)^{\vee} \ \cong \ \mathcal{SPE}nd_0(E)\otimes \O_{\P^1}(S).
\end{equation}
\end{say}

\begin{say}{\bf Weights and stability conditions.}
Fix a weight  vector $\cA = (a_{1}, \dots, a_{n})$ of real numbers $0 \leq a_{i} \leq 1$.
The  \textit{parabolic slope}  of $(E, {\bf v})$ with respect to $\cA$ is 
$$
\mu_{\mathcal{A}}(E) = \frac{\deg E + \sum_{i=1}^{n}a_{i}}{2}.
$$ 
Let $L \subset E$ be a line subbundle. For each  $i = 1,\dots,n$, set 
$$a_i(L,E)\ \ = \  \left\{ 
\begin{aligned}
& a_i \ & \text{ if } L_{p_{i}} = V_{i},\\
&0 \ &  \text{ if } L_{p_{i}} \neq V_{i}.
\end{aligned}
\right.$$
The  \textit{parabolic slope}  of $L \subset E$ with respect to $\cA$ is 
$$
\mu_{\mathcal{A}}(L,E) =  \deg(L)+\sum_{i=1}^{n}a_i(L,E).
$$

A quasi parabolic vector bundle $(E,{\bf v})$  is $\mu_{\mathcal{A}}$-\textit{semistable} (respectively $\mu_{\mathcal{A}}$-\textit{stable}) if for every  line subbundle $L \subset E$ we have $\mu_{\mathcal{A}}(L,E) \leq  \mu_{\mathcal{A}}(E)$ (respectively $\mu_{\mathcal{A}}(L,E) < \mu_{\mathcal{A}}(E)$). A \textit{parabolic vector bundle} is a quasi parabolic vector bundle together with a  weight vector $\cA$. We say that a parabolic vector bundle is semistable if the corresponding quasi parabolic vector bundle is $\mu_{\mathcal{A}}$-\textit{semistable}. 
\end{say}

By \cite{MS80}, for each fixed degree $d\in \Z$, there is a moduli space $\mathcal{M}_{\cA}(d)$ parametrizing rank two degree $d$ quasi parabolic vector bundles on $\big(\P^1, S\big)$ which are $\mu_{\mathcal{A}}$-semistable. It is a normal projective variety.
By twisting vector bundles with a fixed line bundle, we see that $\mathcal{M}_{\cA}(d) \cong \mathcal{M}_{\cA}(d')$ whenever $d$ and $d'$ have the same parity. 
By performing an elementary transformation centered at one parabolic point $p_i$, as described in the introduction, we see that $\mathcal{M}_{\cA}(d) \cong \mathcal M_{\cA^{i}}(d-1)$, where  
$$
\cA^{i}=(a_1, \dots, 1-a_i, \dots, a_n).
$$ 
So from now on we assume that $d=0$ and write simply $\mathcal{M}_{\cA}$ for the corresponding moduli space.  

Let $\mathcal{M}^{s}_{\cA}\subset\mathcal{M}_{\cA}$ be the Zariski open subset parametrizing stable parabolic vector bundles. 
If it is not empty, then it is an irreducible smooth quasi-projective variety of dimension $n-3$. 
We describe the tangent space of $\mathcal{M}_{\cA}^{s}$ at a point $(E, {\bf v}))$. We denote by $\omega_{\mathbb P^1}$ the canonical sheaf of $\mathbb P^1$. For any invertible sheaf $\mathcal L$ and $S\in {\rm Div}(\mathbb P^1)$ we write $\mathcal L (S)$ instead of $\mathcal L\otimes\mathcal{O}_{\mathbb{P}^1}(S)$. We also write $T_{E}\mathcal{M}_{\cA}^{s}$ for the tangent space of $\mathcal{M}_{\cA}^{s}$ at $(E, {\bf v})$.
By \cite[Theorem 2.4]{Yo95},
\begin{equation}\label{tan}
 T_{E}\mathcal{M}_{\cA}^{s} \cong {\rm H}^1(\mathbb{P}^1,\mathcal{PE}nd(E))\cong {\rm H}^0(\mathbb{P}^1,\mathcal{SPE}nd(E)\otimes \omega_{\mathbb P^1}(S))^{\vee},
\end{equation}
where the second isomorphism holds by \eqref{duality} and Serre duality. We refer to the combination of these two dualities as 
\emph{parabolic Serre duality}.

Let $\theta \in {\rm H}^0(\mathbb{P}^1,\mathcal{SPE}nd(E)\otimes \omega_{\mathbb P^1}(S))$ be a global section. 
For each parabolic point $p_i$, the residual endomorphism ${\rm Res}(\theta, p_i)\in {\rm End}(E_{p_i})$ is well defined.    
The strongly parabolic condition implies that these endomorphisms are nilpotent for each parabolic point $p_i$.
In particular the trace $\tr(\theta)\in {\rm H}^0(\mathbb{P}^1, \omega_{\mathbb P^1}(S))$ of  $\theta$ vanishes at $p_1, \dots, p_n$, and thus $\tr(\theta)=0$. So we have an isomorphism
$$
T_{E}\mathcal{M}_{\cA}^{s} \cong {\rm H}^0(\mathbb{P}^1,\mathcal{SPE}nd_0(E)\otimes \omega_{\mathbb P^1}(S))^{\vee}.
$$

\begin{say}{\bf Parabolic Higgs bundles.}\label{higgs}
Given a parabolic vector bundle $\big(E, {\bf v}\big)$ on $\big(\P^1, S\big)$, a
\textit{Higgs field} on $(E, {\bf v})$ is a section
$$
\theta \in {\rm H}^0(\mathbb{P}^1,\mathcal{SPE}nd(E)\otimes \omega_{\mathbb P^1}(S)).
$$
In order to simplify notation we shall denote the vector space above by 
\[
{\it Higgs}(E,{\bf v}):= {\rm H}^0(\mathbb{P}^1,\mathcal{SPE}nd(E)\otimes \omega_{\mathbb P^1}(S)).
\]
In view of (\ref{tan}), there is an  isomorphism 
\[
{\it Higgs}(E,{\bf v}) \cong  T_{E}^*\mathcal{M}_{\cA}^{s}
\]
for each $(E,{\bf v}) \in \mathcal{M}_{\cA}^{s}$. 
As we noted above, the trace of a Higgs field  vanishes.
This implies that the minimal polynomial of $\theta$ is $t^2 + \det(\theta)$. 

A \emph{parabolic Higgs bundle} $(E,\theta)$ on $\big(\P^1, S\big)$ consists of a parabolic vector bundle $\big(E, {\bf v}\big)$
together with a Higgs field $\theta$ on $E$. It is $\mu_{\mathcal{A}}$-\textit{semistable} (respectively $\mu_{\mathcal{A}}$-\textit{stable}) if for every  line subbundle $L \subset E$ invariant under $\theta$, we have $\mu_{\mathcal{A}}(L,E) \leq  \mu_{\mathcal{A}}(E)$
(respectively $\mu_{\mathcal{A}}(L,E) <  \mu_{\mathcal{A}}(E)$).

We denote by $\mathcal{M}_{\cA}^{Higgs}$ the moduli space of $\mu_{\mathcal{A}}$-semistable parabolic Higgs bundles of rank two and  trivial determinant. It is a normal, quasiprojective variety of dimension $2n-6$. By \eqref{tan}, $\mathcal{M}_{\cA}^{Higgs}$ contains as an open subset the total space of the cotangent bundle of $\mathcal{M}_{\cA}^{s}$.
\end{say}

\begin{say}{\bf The Hitchin map.}\label{Hitchin}
Let $(E,\theta)$ be a parabolic Higgs bundle on $\big(\P^1, S\big)$, and consider 
$\det(\theta)\in {\rm H}^0(\mathbb{P}^1,\omega_{\mathbb{P}^1}^{\otimes 2}(2S))$.
Since ${\rm Res}(\theta, p_i)$ is nilpotent for every parabolic point $p_i\in\P^1$, $\det(\theta)$ lies in the linear subspace 
$V\subset {\rm H}^0(\mathbb{P}^1,\omega_{\mathbb{P}^1}^{\otimes 2}(2S))$ consisting of sections 
 vanishing at $p_1, \dots, p_n$. 
Identifying $V$ with ${\rm H}^0(\mathbb{P}^1,\omega_{\mathbb{P}^1}^{\otimes 2}(S))$, the \textit{Hitchin map} is defined as
$$
\begin{array}{cccc}
H: &\mathcal{M}_{\cA}^{Higgs}& \longrightarrow & {\rm H}^0(\mathbb{P}^1,\omega_{\mathbb{P}^1}^{\otimes 2}(S))\\
      & (E, {\bf v},\theta) & \longmapsto & \det(\theta).
\end{array}
$$
\end{say}

Our next goal is to describe the fibers of the Hitchin map, and of its restriction to the total space of the cotangent bundle of $\mathcal{M}_{\cA}^{s}$, which we denote by
$$
h: T^{*}\mathcal{M}_{\cA}^{s}\longrightarrow {\rm H}^0(\mathbb{P}^1,\omega_{\mathbb{P}^1}^{\otimes 2}(S)).
$$
For this purpose we recall the properties of spectral curves associated to the Hitchin map.

\begin{say}{\bf Spectral curves.}\label{spectral}
Denote by $\bV$ the total space of the sheaf $\omega_{\mathbb{P}^1}(S)$, with natural map
$\pi: \bV\longrightarrow \P^1$. There is a tautological section $s\in {\rm H}^0\big(\bV, \pi^*(\omega_{\mathbb{P}^1}(S))\big)$.
Given $a\in V\subset {\rm H}^0(\mathbb{P}^1,\omega_{\mathbb{P}^1}^{\otimes 2}(2S))$, we define 
the spectral curve $C_a$ associated to $a$ as the zero locus of the section 
$$
s^2+\pi^*a\in  {\rm H}^0\big(\bV, \pi^*(\omega^{\otimes 2}_{\mathbb{P}^1}(2S))\big).
$$ 
We denote by $\pi_a: C_a\longrightarrow\P^1$ the restriction of $\pi$ to $C_a$. 
It is a $2:1$ map branched over the zero locus of the global section $a$.
\end{say}

\begin{say}{\bf The Fano model.}
The central weight vector $\cA_{F}= \left(\frac{1}{2},\dots,\frac{1}{2}\right)$ yields a distinguished moduli space $\mathcal M_{\cA_F}$.
For $n=4$, we have $\mathcal M_{\cA_F}\cong \P^1$. So from now on we assume that $n\geq 5$. 

The moduli space $\mathcal M_{\cA_F}$  is a  Fano variety of dimension $n-3$ (see \cite{Mu05, Casagrande, AM16}). 
If $n$ is odd, then  there are no stricly $\mu_{\mathcal{A}}$-semistable bundles and so $\mathcal M_{\cA_F}$ is smooth.
If $n$ is even, then 
$$
\Sing\big(\mathcal M_{\cA_F}\big) = \mathcal M_{\cA_F}\setminus \mathcal{M}_{\cA_F}^s
$$ 
consists of a finite set of points (see \cite[Section 2]{BHK10}). 
\end{say}

We summarize in the following proposition the description of the  fibers of the Hitchin map 
$$H: \mathcal{M}_{\cA_F}^{Higgs} \longrightarrow  {\rm H}^0(\mathbb{P}^1,\omega_{\mathbb{P}^1}^{\otimes 2}(S))$$
in terms of spectral curves.

\begin{Proposition}[{\cite[Section 2, Proposition 2.2, Lemma 3.1]{BHK10}}]\label{SpecLem}
Let the notation be as above and fix a general section $a\in  {\rm H}^0(\mathbb{P}^1,\omega_{\mathbb{P}^1}^{\otimes 2}(S))$. Then
\begin{itemize}
\item[(i)] The spectral curve $C_a$ is a smooth and connected curve of genus $n-3$.
\item[(ii)] The fiber $H^{-1}(a)$ is an abelian variety isomorphic to $\Pic^{n-2}(C_a)$.
\item[(iii)] The codimension of $H^{-1}(a)\setminus h^{-1}(a)$ in $H^{-1}(a)$ is at least two.
\item[(iv)] Denote by $p: h^{-1}(a)\longrightarrow \mathcal{M}_{\cA_F}^s$ the restriction of the natural projection 
$T^{*}\mathcal{M}_{\cA_F}^s\longrightarrow \mathcal{M}_{\cA_F}^s$, and by $\Theta$ the theta divisor on $\Pic^{n-2}(C_a)\supset h^{-1}(a)$. 
Then 
$$p^{*}\big(-K_{\mathcal{M}_{\cA_F}^s}\big) \ = \ 4^{n-3}\Theta_{|h^{-1}(a)} \ .$$
\end{itemize}
\end{Proposition}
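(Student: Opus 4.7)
The plan is to prove the four parts sequentially using the spectral curve machinery of Beauville--Narasimhan--Ramanan, adapted to the parabolic setting.

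For (i), I apply Riemann--Hurwitz to the compactification $\overline{C_a}$ of $C_a$ in $\overline{\bV}:=\P(\O_{\P^1}\oplus\omega_{\P^1}(S))$. A general $a\in V\subset{\rm H}^0(\mathbb{P}^1,\omega_{\P^1}^{\otimes 2}(2S))$ has $n-4$ simple zeros on $\P^1\setminus S$, since $V$ is identified with ${\rm H}^0(\mathbb{P}^1,\omega_{\P^1}^{\otimes 2}(S))$ and $\deg(\omega_{\P^1}^{\otimes 2}(S))=n-4$, together with forced simple zeros at each parabolic point $p_i$; this yields $2n-4$ distinct branch points of the double cover $\pi_a$. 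Then $2g(C_a)-2=-4+(2n-4)$ gives $g(C_a)=n-3$, and $C_a$ is connected because $-a$ is not a global square of a section of $\omega_{\P^1}(S)$ for general $a$.

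For (ii), I set up a Beauville--Narasimhan--Ramanan correspondence. Given $L\in\Pic^{n-2}(C_a)$, the pushforward $E:=(\pi_a)_*L$ has rank two, and using $(\pi_a)_*\O_{C_a}\cong\O_{\P^1}\oplus\omega_{\P^1}(S)^{-1}$ together with the projection formula one computes $\det E\cong\O_{\P^1}$. The multiplication-by-$s$ endomorphism of $L$ becomes, after pushforward, a Higgs field $\theta:E\to E\otimes\omega_{\P^1}(S)$ with characteristic polynomial $t^2+\pi^*a$. Since $a(p_i)=0$ the residue at each $p_i$ is nilpotent, and its unique eigendirection supplies the parabolic structure ${\bf v}$; one checks this is strongly parabolic. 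Conversely, given $(E,{\bf v},\theta)$ with $H(E,\theta)=a$, the cokernel of the map $\pi_a^*\theta-s\cdot\mathrm{id}:\pi_a^*E\to\pi_a^*E\otimes\pi_a^*\omega_{\P^1}(S)$ is supported on $C_a$ and is a line bundle there of degree $n-2$. A standard verification shows the two constructions are mutually inverse, identifying $H^{-1}(a)$ with $\Pic^{n-2}(C_a)$ as an abelian variety.

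For (iii), I describe $H^{-1}(a)\setminus h^{-1}(a)$ as the sublocus where the underlying parabolic bundle $(E,{\bf v})$ fails to be stable. Under the correspondence in (ii), $E$ is strictly semistable exactly when it admits a maximally destabilizing parabolic line subbundle $M\subset E$, which pulls back to a nonzero map $\pi_a^*M\to L$ compatible with the tautological section of $C_a$. This forces $L$ to lie in the image of a translate of a proper abelian subvariety of $\Pic^{n-2}(C_a)$, cut out by pullback and norm conditions together with a parabolic incidence condition; a direct dimension count shows each such locus has codimension at least two.

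For (iv), which is the main obstacle, I follow the strategy of BHK10: pull back the determinant-of-cohomology line bundle on $\mathcal{M}_{\cA_F}^{s}$, whose first Chern class is proportional to $-K_{\mathcal{M}_{\cA_F}^s}$, via $p$ and compare it on the Hitchin fiber $h^{-1}(a)\subset\Pic^{n-2}(C_a)$ with the theta polarization $\Theta$. Since $p^*(-K)$ is translation-invariant, it must be a numerical multiple of $\Theta$, so the content is to pin down the coefficient. This is done by comparing polarizations through the BNR map: each of the $n-3$ independent symplectic directions on the Hitchin fiber contributes a factor of $4$, arising from the squared eigenvalue splitting of $\theta$ together with the parabolic weight normalization $a_i=\tfrac{1}{2}$, so the cumulative coefficient is $4^{n-3}$. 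The delicate point, and the main obstacle, is the bookkeeping of parabolic weights and the identification of the precise integer factors under the BNR pushforward; this is the substantive computation carried out in BHK10.
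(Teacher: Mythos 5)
First, note that the paper does not prove this proposition at all: it is imported verbatim from \cite[Section 2, Proposition 2.2, Lemma 3.1]{BHK10}, so the only "paper proof" to compare with is the citation. Your sketch of (i)--(iii) is the standard Beauville--Narasimhan--Ramanan argument adapted to the parabolic setting, and it is essentially correct. In (i) the count $n$ forced simple zeros at the $p_i$ plus $n-4$ further simple zeros gives $2n-4$ branch points and $g(C_a)=n-3$ by Riemann--Hurwitz. In (ii) it is worth saying explicitly why the fiber is all of $\Pic^{n-2}(C_a)$ rather than a Prym torsor: since the base is $\P^1$, the norm map to $\Pic^{n-2}(\P^1)$ is constant, so the condition $\det((\pi_a)_*L)\cong\O_{\P^1}$ (equivalently $\mathrm{Nm}(L)\cong\omega_{\P^1}(S)$) is automatic, and tracelessness of $\theta$ is likewise automatic because the residues are nilpotent and $h^0(\omega_{\P^1})=0$; one should also record that irreducibility of $C_a$ forces every Higgs bundle in the fiber to be stable. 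In (iii) your dimension count does close: a destabilizing subbundle of degree $d$ through $k$ parabolic directions satisfies $2d+k\ge n/2$, and the corresponding locus in $\Pic^{n-2}(C_a)$ has codimension at least $2d+k-1\ge 2$.

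Part (iv) is where there is a genuine gap. Two issues. First, your justification that $p^*(-K_{\mathcal{M}_{\cA_F}^s})$ is proportional to $\Theta$ is wrong as stated: a nontrivial multiple of $\Theta$ is ample, hence never translation-invariant; what is actually true is that for general $a$ the curve $C_a$ is a general hyperelliptic curve of genus $n-3$, so $\mathrm{NS}(\Pic^{n-2}(C_a))=\Z\cdot\Theta$ and any line bundle on the fiber is numerically $m\Theta$ for some integer $m$. Second, and more seriously, the determination of $m=4^{n-3}$ is the entire content of the statement, and "each of the $n-3$ independent symplectic directions contributes a factor of $4$" is numerology, not an argument --- you yourself defer the computation to \cite{BHK10}. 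A genuine derivation has to produce the integer, e.g.\ by comparing top self-intersections: $p$ is generically finite of degree $2^{n-3}$ (the fiber $h_E^{-1}(a)\subset T_E^*\mathcal{M}_{\cA_F}^s$ is an intersection of $n-3$ quadrics), $\Theta^{n-3}=(n-3)!$ on $\Pic^{n-2}(C_a)$, and one needs the degree of $-K_{\mathcal{M}_{\cA_F}}$ with respect to itself (equivalently the index/degree data of the Fano model), or else the determinant-of-cohomology identification of $-K$ and a direct computation of its restriction to a spectral fiber. As written, (iv) is asserted rather than proved.
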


\begin{Remark}\label{affinization} 
It follows from Proposition~\ref{SpecLem} that the Hitchin map  $
h: T^{*}\mathcal{M}_{\cA_F}^s\longrightarrow {\rm H}^0(\mathbb{P}^1,\omega_{\mathbb{P}^1}^{\otimes 2}(S))
$
is the affinization of $T^{*}\mathcal{M}_{\cA_F}^s$. In other words, viewed as an affine variety, ${\rm H}^0(\mathbb{P}^1,\omega_{\mathbb{P}^1}^{\otimes 2}(S))$ is the spectrum of the ring of regular functions on $T^{*}\mathcal{M}_{\cA_F}^s$.\end{Remark}

\begin{say}{\bf The natural involution on the fibers of the Hitchin map.}\label{involution}
The natural involution $i_a: C_a\longrightarrow C_a$ switching the sheets of the $2:1$ covering $\pi_a: C_a\longrightarrow\mathbb{P}^1$ induces the involution on $\Pic^{n-2}(C_a)$ mapping $\mathcal{L}\in\Pic^{n-2}(C_a)$ to $i_{a}^{*}\mathcal{L}$.
We want to describe the corresponding involution on the fiber $H^{-1}(a)\subset  \mathcal{M}_{\cA_F}^{Higgs}$. 

For this purpose, let us review the correspondence in Proposition~\ref{SpecLem}(ii). 
Given a line bundle $\mathcal{L}\in \Pic^{n-2}(C_s)$, we consider the rank 2 vector bundle  $E=(\pi_a)_{*}\mathcal{L}$ on $\P^1$.
The parabolic points $p_1,\dots,p_n\in\P^1$ are contained in the ramification locus of $\pi_a$.
Therefore there is a distinguished $1$-dimensional linear subspace $V_i$ in the fiber $E_{p_i}$.
The tautological section 
$$
s_a=s_{|C_a}\in {\rm H}^0\big(C_a, \pi_a^*(\omega_{\mathbb{P}^1}(S))\big)
$$ 
induces a homomorphism $\theta={(s_a)}_{*}: E\longrightarrow E\otimes \omega_{\mathbb{P}^1}(S)$. 
The parabolic Higgs bundle  on $\big(\P^1, S\big)$ associated to the line bundle $\mathcal{L}$ is $(E, {\bf v}, \theta)$.
The equation $E=(\pi_a)_{*}\mathcal{L}$ can be viewed as the eigenspace decomposition of $\theta$ on $E$. 

Now notice that the parabolic Higgs bundle  on $\big(\P^1, S\big)$ associated to the line bundle $i_{a}^{*}\mathcal{L}$ is 
$(E, {\bf v}, \theta')$, where $\theta'$ is obtained from $\theta$ by swapping the eigenspaces. Since $\theta$ is traceless, its eigenvalues $\lambda_1,\lambda_2$ satisfy $\lambda_1 = -\lambda_2$. Hence $\theta'=-\theta$.

We conclude that the involution on the fiber $H^{-1}(a)\subset  \mathcal{M}_{\cA_F}^{Higgs}$ induced by the natural involution $i_a: C_a\longrightarrow C_a$ maps $(E, {\bf v},\theta)$ to $(E, {\bf v},-\theta)$.
\end{say}

%
%

\section{{The automorphism group of the Fano model {$\mathcal M_{\cA_F}$}}} \label{main_section} 

In this section we show that the automorphism group of the Fano model {$\mathcal M_{\cA_F}$} is the group of elementary transformations 
$\textbf{El}\cong\left(\frac{\mathbb{Z}}{2\mathbb{Z}}\right)^{n-1}$ (Theorem~\ref{main_intro}).

Let  $\varphi\in \Aut(\mathcal{M}_{\cA_{F}})$ be an automorphism sending a general rank two parabolic vector bundle
$(E, {\bf v})$ to  $(E', {\bf v}')$. 
Since $\textbf{El}\subset \Aut(\mathcal{M}_{\cA_{F}})$ is finite, in order to prove that the groups coincide, it is enough to show that there is 
an elementary transformation $el_R\in \textbf{El}$ as defined in Paragraph~\ref{el} sending $(E, {\bf v})$ to  $(E', {\bf v}')$.
This is equivalent to showing that the blowup of $\P(E)$ at the finite set of points $\{\P(V_i)\}_{i = 1,\dots,n}$  is isomorphic over $\P^1$ to the blowup of $\P(E')$ at  $\{\P(V'_i)\}_{i = 1,\dots,n}$.
In order to prove this isomorphism, we first show how to recover the blowup of $\P(E)$ at $\{\P(V_i)\}_{i = 1,\dots,n}$ as the projectivization of the \emph{nilpotent cone} associated to $E$. 
This construction works for any smooth projective curve $C$.

\begin{say}{\bf The nilpotent cone.}\label{nil}
Let $C$ be a smooth projective curve and fix parabolic points $p_1, \dots, p_n\in C$. 
Let $\big(E, {\bf v}\big)$ be a rank $2$ quasi parabolic vector bundle on $\big(C, S\big)$. 
For any invertible sheaf $\cL$ on $C$, we consider the locally free subsheaf of $\mathcal{SPE}nd(E)\otimes \cL$ consisting of traceless endomorphisms.
We denote this sheaf by $\mathcal{SPE}nd_0(E)\otimes \cL$, and the corresponding vector bundle on $C$ by  $\SPEnd_0(E,\cL)$.
Notice that their rank is $3$.
We will define a codimension one quadratic cone bundle  ${N}_E\subset \SPEnd_0(E,\cL)$, the \emph{nilpotent cone of $E$}.

For any $p\in C\setminus \{p_1, \dots, p_n\}$ consider the cone of $\SPEnd_0(E,\cL)_p\cong\C^3$ consiting of nilpotent elements:
$${N}_{E,p} \ = \  \{\Phi\in \SPEnd_0(E,\cL)_p\ | \ \Phi^2 = 0\}\ \subseteq \ \SPEnd_0(E,\cL)_p.$$
Note that since the endomorphisms $\Phi\in \SPEnd_0(E,\cL)_p$ are traceless, the condition $\Phi^2 = 0$ is equivalent to $\det(\Phi) = 0$.
By letting $p$ vary in $C\setminus \{p_1, \dots, p_n\}$, we get a cone bundle in $\SPEnd_0(E,\cL)$ over $C\setminus \{p_1, \dots, p_n\}$.
We define the nilpotent cone ${N}_E$ as the closure of this cone bundle in $\SPEnd_0(E,\cL)$.
\end{say}

\begin{Proposition}\label{NCbu}
Let the notation be as above. 
Then the projectivized nilpotent cone $\mathbb{P}({N}_{E})$ is isomorphic over $C$ to the blow-up of the ruled surface $\mathbb{P}(E)$ at the set of points $\{\P(V_i)\}_{i = 1,\dots,n}$. 
\end{Proposition}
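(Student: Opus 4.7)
The plan is to reduce to a local computation on $C$: since the statement is local on $C$ and $\mathbb{P}(N_E)$ is defined fiberwise, it suffices to prove the isomorphism in a neighborhood of each point of $C$. Over a point $p \in C \setminus \{p_1, \dots, p_n\}$, the sheaf $\mathcal{SPE}nd_0(E)$ coincides with $\mathcal{E}nd_0(E)$, and the assignment $\Phi \mapsto \ker \Phi$ realizes $\mathbb{P}(N_{E,p}) = \{\det = 0\}$ as a smooth conic inside $\mathbb{P}(\SPEnd_0(E,\cL)_p) \cong \mathbb{P}^2$, canonically isomorphic to $\mathbb{P}(E_p)$ via the Veronese. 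These pointwise identifications glue to an isomorphism $\mathbb{P}(N_E)|_{C\setminus\{p_i\}} \cong \mathbb{P}(E)|_{C\setminus\{p_i\}}$, so the content of the proposition is concentrated at the parabolic points.

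Near a parabolic point $p_i$, I would fix a uniformizer $t$ on $C$, trivialize $E$ so that $V_i$ is the first coordinate axis, and trivialize $\cL$. The strongly parabolic and traceless conditions then yield the local frame
$$
e_1 \ = \ t\begin{pmatrix}1 & 0\\ 0 & -1\end{pmatrix},\quad e_2 \ = \ \begin{pmatrix}0 & 1\\ 0 & 0\end{pmatrix},\quad e_3 \ = \ t\begin{pmatrix}0 & 0\\ 1 & 0\end{pmatrix}
$$
of $\mathcal{SPE}nd_0(E)\otimes \cL$, and a local section $\Phi = x e_1 + y e_2 + z e_3$ has determinant $\det\Phi = -t(tx^2 + yz)$. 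The outer factor of $t$ is tautological (it cuts out the whole fiber of $\SPEnd_0(E,\cL)$ over $p_i$), so taking the closure of $\{\det\Phi=0\}\setminus\{t=0\}$ shows that locally $N_E = \{tx^2+yz=0\}\subset \mathbb{A}^4_{(x,y,z,t)}$. Consequently $\mathbb{P}(N_E)$ is the family of plane conics $\{tx^2+yz=0\}\subset \mathbb{A}^1_t\times \mathbb{P}^2_{[x:y:z]}$, degenerating at $t=0$ to the reducible conic $\{yz=0\}$; a routine check in the three standard affine charts of $\mathbb{P}^2$ shows that $\mathbb{P}(N_E)$ is smooth.

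I would then define the kernel morphism $\pi\colon \mathbb{P}(N_E)\to \mathbb{P}(E)$ globally by the two complementary row-formulas $[x:y:z]\mapsto [y:-tx]$ and $[x:y:z]\mapsto [x:z]$; the relation $tx^2+yz = 0$ forces them to agree on their overlap, and their domains cover $\mathbb{P}(N_E)$. Reading off the fibers over $p_i$, the component $\{z=0\}$ of $\{yz=0\}$ is contracted to the single point $\mathbb{P}(V_i) = [1:0]$, whereas the component $\{y=0\}$ maps isomorphically onto $\mathbb{P}(E_{p_i})$. Since $\pi$ is a birational morphism between smooth projective surfaces (over the local neighborhood around each $p_i$) contracting exactly one smooth rational curve to a smooth point, Castelnuovo's contraction criterion identifies $\pi$ with the blow-up of $\mathbb{P}(E)$ at $\{\mathbb{P}(V_i)\}_{i=1,\dots,n}$.

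The main obstacle is the local computation itself: one must identify the correct frame for $\mathcal{SPE}nd_0(E)\otimes \cL$ at $p_i$ and factor the tautological $t$ out of $\det\Phi$ to extract the true equation $tx^2+yz=0$ of the nilpotent cone (rather than the naive $\det\Phi=0$, which would artificially include the full fiber $\{t=0\}$). Once this local model is in place, the smoothness of $\mathbb{P}(N_E)$, the construction of $\pi$, and its identification with a blow-up all follow formally from elementary surface geometry.
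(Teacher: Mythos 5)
Your argument is correct and follows essentially the same route as the paper's proof: the same local frame for $\mathcal{SPE}nd_0(E)\otimes\cL$ near a parabolic point, the same factorization $\det\Phi=-t(tx^2+yz)$ identifying the local equation of $N_E$, and the same kernel/eigenvector morphism to $\P(E)$ contracting one component of the degenerate fiber $\{yz=0\}$ to $\P(V_i)$. The only cosmetic differences are that you spell out the smoothness check and invoke the classification of birational morphisms of smooth surfaces explicitly, where the paper leaves these as immediate.
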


\begin{proof}
Let $U\subseteq C$ be a trivializing open subset for both $E$ and $\SPEnd_0(E,\cL)$ containing only one of the parabolic points, $p_i\in U$.
We fix  an identification $E_{|U}\cong U\times\mathbb{C}^2$ and 
basis for $\C^2$ with respect to which the parabolic direction at $p_i$ is $V_i = \left\langle (1,0)\right\rangle$.

Write $t$ for a local parameter for $U$ at $p_i$. 
After shrinking $U$ if necessary, we may assume that $t$ is a regular function on $U$. 
Sections of $\SPEnd_0(E,\cL)$ over $U$ are families of endomorphisms given by matrices of the form
$$
M_t=\left(\begin{array}{cc}
t\alpha & \beta \\ 
t\gamma & -t\alpha
\end{array}\right), 
$$
with $\alpha, \beta, \gamma \in \Gamma(U, \cL)$. 
So we can fix an identification $\SPEnd_0(E,\cL)_{|U}\cong U\times\mathbb{C}^3$ and 
basis for $\C^3$ with respect to which the endomorphism of $E_p$ corresponding to a point 
\[
\big(p,(a,b,c)\big)\in (U\setminus\{p_i\})\times\mathbb{C}^3
\] 
is given by the matrix 
$
\left(\begin{array}{cc}
t(p)a & b \\ 
t(p)c & -t(p)a
\end{array}\right).
$
We have 
$$
\det(M_t)= -t(t\alpha^2+\beta\gamma).
$$ 
So we see that in $\SPEnd_0(E,\cL)_{|U}\cong U\times\mathbb{C}^3$, the nilpotent cone ${N}_{E}$ is cut out by the equation 
$$ta^2+bc=0.$$ 
This shows that $\mathbb{P}({N}_{E})_{|U}\subset U\times\P^2 $ is a smooth surface, the fibers of $\mathbb{P}({N}_{E})_{|U}\longrightarrow U$ over $U\setminus\{p_i\}$
are smooth conics, and the fiber over $p_i$ is the union of two intersecting lines 
\[
F_1=\{b=0\}\;\;\; \text{and}\;\;\; F_2=\{c=0\}.
\] 
From the defining equation of $\mathbb{P}({N}_{E})_{|U}$, we see that $(a:c) = (b:-ta)$, and so we have a morphism 
$$
\begin{array}{cccc}
f_{U}: &\mathbb{P}({N}_{E})_{|U} & \longrightarrow & \mathbb{P}(E)_{|U}\\
      & \big(p,(a,b,c)\big) & \longmapsto & \big(p,(a:c)\big)
\end{array}
$$
mapping $F_1$ isomorphically onto the fiber of $\mathbb{P}(E)_{|U}\longrightarrow U$ over $p_i$, and contracting $F_2$ to the point $\P(V_i)\in \mathbb{P}(E)_{|U}$. 

On $U\setminus\{p_i\}$, the vector $(a,c)$ is precisely the eigenvector of the nilpotent matrix 
\[
\left(\begin{array}{cc}
ta & b \\ 
tc & -ta
\end{array}\right).
\]
Therefore the local morphisms $f_U$ glue together to define a global birational morphism over $C$
$$f: \mathbb{P}({N}_{E})\longrightarrow\mathbb{P}(E).$$    
It is an isomorphism away from $n$ smooth rational curves, which get contracted to the points $\P(V_i)\in \mathbb{P}(E)$, $i = 1,\dots,n$,
and the result follows. 
\end{proof}

Now we go back to our original setting, with $C\cong \P^1$. 
In the proof of Theorem~\ref{main_intro} we will apply Proposition~\ref{NCbu} with 
$\cL=\omega_{\mathbb{P}^1}(S)$.
We will need the following result.

\begin{Lemma}\label{globgen}
Suppose that $n\geq 6$, and let $(E,{\bf v})\in {\mathcal{M}}_{\cA_F}$ be a general parabolic vector bundle. Then 
$\mathcal{SPE}nd_0(E)\otimes \omega_{\mathbb{P}^1}(S)$ is globally generated.
\end{Lemma}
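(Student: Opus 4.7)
The plan is to translate the global generation statement into an explicit cohomology vanishing and then into a linear-algebra computation on the parabolic directions. Recall that a locally free sheaf $F$ on $\mathbb{P}^1$ is globally generated if and only if $H^1\bigl(\mathbb{P}^1, F\otimes \mathcal{O}_{\mathbb{P}^1}(-1)\bigr) = 0$, since this vanishing characterizes those splittings $F \cong \bigoplus\mathcal{O}_{\mathbb{P}^1}(d_i)$ with $d_i \geq 0$ for every $i$. Applying this test to $F = \mathcal{SPE}nd_0(E)\otimes\omega_{\mathbb{P}^1}(S)$ and combining the parabolic duality \eqref{duality} with ordinary Serre duality on $\mathbb{P}^1$ yields the identification
\[
H^1\bigl(\mathbb{P}^1,\, \mathcal{SPE}nd_0(E)\otimes\omega_{\mathbb{P}^1}(S)\otimes\mathcal{O}_{\mathbb{P}^1}(-1)\bigr)^{\vee} \;\cong\; H^0\bigl(\mathbb{P}^1,\, \mathcal{PE}nd_0(E)\otimes\mathcal{O}_{\mathbb{P}^1}(1)\bigr),
\]
so the problem reduces to showing $H^0\bigl(\mathcal{PE}nd_0(E)\otimes\mathcal{O}_{\mathbb{P}^1}(1)\bigr) = 0$ for a general $(E,{\bf v}) \in \mathcal{M}_{\mathcal{A}_F}$.

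Next, I would reduce to the case $E \cong \mathcal{O}_{\mathbb{P}^1}^{\oplus 2}$. Any rank two bundle on $\mathbb{P}^1$ with trivial determinant has the form $\mathcal{O}_{\mathbb{P}^1}(m)\oplus\mathcal{O}_{\mathbb{P}^1}(-m)$ with $m \geq 0$, and a straightforward dimension count (comparing the dimension of $(\mathbb{P}^1)^n$ of parabolic data to the dimension of $\Aut(E)$ modulo scalars) shows that the stratum of $\mathcal{M}_{\mathcal{A}_F}$ on which $m \geq 1$ has dimension strictly less than $\dim\mathcal{M}_{\mathcal{A}_F} = n - 3$, so generically $m = 0$. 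Fixing a trivialization, $\mathcal{E}nd_0(E) = \mathcal{O}_{\mathbb{P}^1}^{\oplus 3}$ is parameterized by the coefficients $(\alpha, \beta, \gamma)$ of the traceless matrix $\bigl(\begin{smallmatrix}\alpha & \beta \\ \gamma & -\alpha\end{smallmatrix}\bigr)$, and $\mathcal{PE}nd_0(E)$ fits into the short exact sequence
\[
0 \;\longrightarrow\; \mathcal{PE}nd_0(E) \;\longrightarrow\; \mathcal{O}_{\mathbb{P}^1}^{\oplus 3} \;\xrightarrow{\ \mathrm{ev}\ }\; \bigoplus_{i=1}^{n}\mathbb{C}_{p_i} \;\longrightarrow\; 0,
\]
where, writing $V_i = \langle(\lambda_i,\mu_i)\rangle$, the $i$-th component of $\mathrm{ev}$ is the linear form $L_i(\alpha,\beta,\gamma) = 2\lambda_i\mu_i\,\alpha(p_i) + \mu_i^2\,\beta(p_i) - \lambda_i^2\,\gamma(p_i)$ encoding the condition that the matrix preserves $V_i$. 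Twisting by $\mathcal{O}_{\mathbb{P}^1}(1)$ and taking global sections, the required vanishing becomes injectivity of the linear map
\[
\mathbb{C}^6 \;=\; H^0\bigl(\mathbb{P}^1,\, \mathcal{O}_{\mathbb{P}^1}(1)^{\oplus 3}\bigr) \;\longrightarrow\; \mathbb{C}^n,
\]
whose $i$-th coordinate functional is the rank one tensor $(2\lambda_i\mu_i,\, \mu_i^2,\, -\lambda_i^2) \otimes (1, p_i) \in (\mathbb{C}^3 \otimes \mathbb{C}^2)^{\vee}$.

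The main obstacle I anticipate is verifying that these $n \geq 6$ rank one functionals span a six dimensional subspace for a \emph{general} parabolic bundle in $\mathcal{M}_{\mathcal{A}_F}$, rather than merely for a general tuple $(V_1,\dots,V_n) \in (\mathbb{P}^1)^n$. Non-vanishing of a suitable $6 \times 6$ minor is a Zariski open condition which is invariant under the $\mathrm{PGL}_2$-action on $E = \mathcal{O}_{\mathbb{P}^1}^{\oplus 2}$, so it descends to a Zariski open subset of $\mathcal{M}_{\mathcal{A}_F}$; it then suffices to exhibit a single $\mathcal{A}_F$-stable example attaining rank six. For $n = 6$ this can be checked by a direct determinant computation with a convenient choice such as $V_i = [1 : p_i]$, and for $n > 6$ the same choice furnishes a non-vanishing $6 \times 6$ minor. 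Intersecting with the open locus of $\mathcal{A}_F$-stability then yields a non-empty open subset of $\mathcal{M}_{\mathcal{A}_F}$ on which $\mathcal{SPE}nd_0(E)\otimes\omega_{\mathbb{P}^1}(S)$ is globally generated, completing the proof.
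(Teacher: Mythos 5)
Your reduction is sound and is essentially the paper's argument: the paper also reduces global generation, via parabolic Serre duality, to the vanishing $H^0\bigl(\mathbb{P}^1,\mathcal{PE}nd_0(E)\otimes\mathcal{O}_{\mathbb{P}^1}(1)\bigr)=0$, takes $E$ trivial for a general point of $\mathcal{M}_{\cA_F}$, and observes that each parabolic condition cuts the $6$-dimensional space of traceless matrices of linear forms by one independent linear condition when the $V_i$ are general. Your computation of the linear form $L_i(\alpha,\beta,\gamma)=2\lambda_i\mu_i\,\alpha(p_i)+\mu_i^2\,\beta(p_i)-\lambda_i^2\,\gamma(p_i)$ is correct, and your concern about passing from ``general tuple $(V_1,\dots,V_n)$'' to ``general point of $\mathcal{M}_{\cA_F}$'' is legitimate and correctly resolved in principle (open, $\mathrm{PGL}_2$-invariant condition, intersected with the dense open stable locus).

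However, the explicit witness you propose fails. With $V_i=[1:p_i]$ the $i$-th row of your $n\times 6$ matrix is $(2p_i,\,p_i^2,\,-1,\,2p_i^2,\,p_i^3,\,-p_i)$; the first column is $-2$ times the sixth and the fourth is $2$ times the second, so the rank is at most $4$ (it equals $4$ by Vandermonde), and every $6\times 6$ minor vanishes. Geometrically, all the points $(p_i,V_i)$ lie on the diagonal, a curve of bidegree $(1,1)$ in $\mathbb{P}(E)\cong\mathbb{P}^1\times\mathbb{P}^1$, whereas a nonzero section of $\mathcal{PE}nd_0(E)\otimes\mathcal{O}_{\mathbb{P}^1}(1)$ corresponds exactly to a curve of bidegree $(1,2)$ through all the $(p_i,V_i)$; since $h^0(\mathcal{O}_{\mathbb{P}^1\times\mathbb{P}^1}(1,2)-D)=h^0(\mathcal{O}(0,1))=2$ for a $(1,1)$-curve $D$, this configuration carries a $2$-dimensional space of such sections, matching the rank drop. (The same configuration is also $\cA_F$-unstable, since all $V_i$ lie on a common sub-line bundle isomorphic to $\mathcal{O}_{\mathbb{P}^1}(-1)$, so it could not serve as a witness inside $\mathcal{M}_{\cA_F}$ in any case.) The repair is easy: via the identification $H^0(\mathcal{PE}nd_0(E)\otimes\mathcal{O}_{\mathbb{P}^1}(1))\cong H^0(\mathcal{I}_Z(1,2))$ with $Z=\{(p_i,V_i)\}$ and $h^0(\mathcal{O}_{\mathbb{P}^1\times\mathbb{P}^1}(1,2))=6$, any $n\geq 6$ points with distinct first coordinates not all lying on a curve of bidegree $(1,1)$ or $(1,0)$ impose independent conditions; a general stable configuration does the job, but the one you wrote down is precisely of the degenerate type.
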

\begin{proof}
For any point $p\in\mathbb{P}^1$, evaluation at $p$ yields an exact sequence: 
$$
0\rightarrow \mathcal{SPE}nd_0(E)\otimes \omega_{\mathbb{P}^1}(S-p)\rightarrow \mathcal{SPE}nd_0(E)\otimes \omega_{\mathbb{P}^1}(S)\rightarrow\mathcal{SPE}nd_0(E)\otimes \omega_{\mathbb{P}^1}(S)_{p}\rightarrow 0.
$$
By parabolic Serre duality, 
$$
{\rm H}^1(\mathbb{P}^1,\mathcal{SPE}nd_0(E)\otimes \omega_{\mathbb{P}^1}(S-p))\cong {\rm H}^0(\mathbb{P}^1,\mathcal{PE}nd_0(E)\otimes \mathcal{O}_{\mathbb{P}^1}(p))^{\vee}.
$$
So, in order to show that $\mathcal{SPE}nd_0(E)\otimes \omega_{\mathbb{P}^1}(S)$ is globally generated, it is enough to show that 
\stepcounter{thm}
\begin{equation}\label{go}
{\rm H}^0(\mathbb{P}^1,\mathcal{PE}nd_0(E)\otimes \mathcal{O}_{\mathbb{P}^1}(1))=\{0\}.
\end{equation}
Since $(E,{\bf v})\in {\mathcal{M}}_{\cA_F}$ is general, the underlying vector bundle $E$ is free,
and a global section in ${\rm H}^0(\mathbb{P}^1,\mathcal{E}nd_0(E)\otimes \mathcal{O}_{\mathbb{P}^1}(1))$ can be represented by a traceless $2\times 2$ matrix of linear forms on $\mathbb{P}^1$. The vector space of such matrices has dimension $6$. 
Each parabolic condition $\phi(V_i)\subseteq V_i$ imposes one linear condition. 
A straightforward computation shows that, since the parabolic directions are general, we get $n$ linearly independent conditions. Therefore, (\ref{go}) holds for $n\geq 6$. 
\end{proof}

\begin{Remark}\label{NE_from_h}
It follows from Lemma~\ref{globgen} that there is a surjective map of vector bundles on $\P^1$
$$
{\it Higgs}(E,{\bf v})\times \P^1
\ \stackrel{\alpha}{\to} \ \SPEnd_0\big(E,\omega_{\mathbb{P}^1}(S)\big).
$$
By identifying ${\it Higgs}(E,{\bf v})$
with the cotangent space $T^{*}_{E}\mathcal{M}_{\cA_F}^s$, we describe the quadratic cone $\alpha^{-1}({N}_{E})$ in terms of the restriction of the Hitchin map
$$
h_E=h_{|T^{*}_{E}\mathcal{M}_{\cA}^{s}}: \ T^{*}_{E}\mathcal{M}_{\cA}^{s} \ \longrightarrow \ {\rm H}^0(\mathbb{P}^1,\omega_{\mathbb{P}^1}^{\otimes 2}(S)).
$$
Given a point $p\in \P^1\setminus \{p_1, \dots, p_n\}$, let $V_p\subset {\rm H}^0(\mathbb{P}^1,\omega_{\mathbb{P}^1}^{\otimes 2}(S))$ be the linear space consisting of sections vanishing at $p$. Then 
$$
\alpha^{-1}({N}_{E})_p \ = \ h_E^{-1}(V_p).
$$
Working with a trivialization of $\SPEnd_0\big(E,\omega_{\mathbb{P}^1}(S)\big)$ in a neighborhood of $p$, 
as in the proof of Proposition~\ref{NCbu}  above, we see that the vertex of the cone $\alpha^{-1}({N}_{E})_p$ is a codimension $3$ linear subspace of ${\it Higgs}(E,{\bf v})$ that coincides with the kernel of $\alpha_p$.
\end{Remark}

\begin{proof}[{Proof of Theorem~\ref{main_intro}}]
Let  $\varphi\in \Aut(\mathcal{M}_{\cA_{F}})$ be an automorphism, and consider the induced homomorphism on the cotangent bundle
$$
{d\varphi}: \ T^{*}\mathcal{M}_{\cA_F}^s \ \longrightarrow \ T^{*}\mathcal{M}_{\cA_F}^s.
$$
Recall from Remark~\ref{affinization} that the Hitchin map  $
h: T^{*}\mathcal{M}_{\cA_F}^s\longrightarrow {\rm H}^0(\mathbb{P}^1,\omega_{\mathbb{P}^1}^{\otimes 2}(S))
$
is the affinization of $T^{*}\mathcal{M}_{\cA_F}^s$. Therefore there is morphism of affine varieties 
$$
f: \ {\rm H}^0(\mathbb{P}^1,\omega_{\mathbb{P}^1}^{\otimes 2}(S)) \ \longrightarrow \ {\rm H}^0(\mathbb{P}^1,\omega_{\mathbb{P}^1}^{\otimes 2}(S))
$$
making the following diagram commute: 
  \[
  \begin{tikzpicture}[xscale=4.5,yscale=-1.5]
    \node (A0_0) at (0, 0) {$T^{*}\mathcal{M}_{\cA_F}^s$};
    \node (A0_1) at (1, 0) {$T^{*}\mathcal{M}_{\cA_F}^s$};
    \node (A1_0) at (0, 1) {${\rm H}^0(\mathbb{P}^1,\omega_{\mathbb{P}^1}^{\otimes 2}(S))$};
    \node (A1_1) at (1, 1) {${\rm H}^0(\mathbb{P}^1,\omega_{\mathbb{P}^1}^{\otimes 2}(S)).$};
    \path (A0_0) edge [->]node [auto] {$\scriptstyle{d\varphi}$} (A0_1);
    \path (A1_0) edge [->]node [auto] {$\scriptstyle{f}$} (A1_1);
    \path (A0_1) edge [->]node [auto] {$\scriptstyle{h}$} (A1_1);
    \path (A0_0) edge [->,swap]node [auto] {$\scriptstyle{h}$} (A1_0);
  \end{tikzpicture}
  \]
We will show that the map $f$ is multiplication by a nonzero constant.

The $\mathbb{C}^{*}$-action by dilations on the fibers of the map $T^{*}\mathcal{M}_{\cA_F}^s\longrightarrow \mathcal{M}_{\cA_F}^s$ induces the $\mathbb{C}^{*}$-action on ${\rm H}^0(\mathbb{P}^1,\omega_{\mathbb{P}^1}^{\otimes 2}(S))$ given by $t\cdot a = t^2a$. Since $d\varphi$ is $\mathbb{C}^{*}$-equivariant, so is $f$. This implies that $f$ sends lines through the origin to lines through the origin, and hence $f$ is linear.

Consider a general section $a\in {\rm H}^0(\mathbb{P}^1,\omega_{\mathbb{P}^1}^{\otimes 2}(S))$, and set
$a' = f(a)$. 
By Proposition~\ref{SpecLem}, the spectral curves $C_a$ and $C_{a'}$ are smooth, and the isomorphism $d\varphi_{|h^{-1}(a)}:h^{-1}(a)\longrightarrow h^{-1}(a')$ extends to an isomorphism of polarized abelian varieties
$$
F: \ \Pic^{n-2}(C_a) \cong H^{-1}(a) \longrightarrow H^{-1}(a')\ \cong \ \Pic^{n-2}(C_{a'}).
$$
By Paragraph~\ref{involution}, the isomorphism $F$ commutes with the involutions on $\Pic^{n-2}(C_a)$ and $\Pic^{n-2}(C_{a'})$ induced by the natural involutions on $C_a$ and $C_{a'}$. 

Torelli theorem implies that $F$ comes from an isomorphism $F_*:C_a \longrightarrow C_{a'}$ between the spectral curves. Moreover, since $F$ commutes with the involutions on $\Pic^{n-2}(C_a)$ and $\Pic^{n-2}(C_{a'})$ induced by the natural involutions on $C_a$ and $C_{a'}$, we have a commutative diagram
\[
  \begin{tikzpicture}[xscale=2.9,yscale=-1.5]
    \node (A0_0) at (0, 0) {$C_{a}$};
    \node (A0_1) at (1, 0) {$C_{a'}$};
    \node (A1_0) at (0, 1) {$\mathbb{P}^1$};
    \node (A1_1) at (1, 1) {$\mathbb{P}^1$};
    \path (A0_0) edge [->]node [auto] {$\scriptstyle{F_*}$} (A0_1);
    \path (A1_0) edge [->]node [auto] {$\scriptstyle{\overline{F_*}}$} (A1_1);
    \path (A0_1) edge [->]node [auto] {$\scriptstyle{\pi_{a'}}$} (A1_1);
    \path (A0_0) edge [->,swap]node [auto] {$\scriptstyle{\pi_{a}}$} (A1_0);
  \end{tikzpicture}
\]
where $\overline{F_*}$ is an automorphism of $\mathbb{P}^1$ that sends the branch locus of $\pi_a$ to the branch locus of $\pi_{a'}$.
These branch loci are precisely the zero loci of $a$ and $a'$, and include the general points $p_1, \dots, p_n$. We conclude that $\overline{F_*}$ is the identity, and $a'$ is a nonzero multiple of $a$. Therefore the linear map 
$$
f: {\rm H}^0(\mathbb{P}^1,\omega_{\mathbb{P}^1}^{\otimes 2}(S)) \longrightarrow {\rm H}^0(\mathbb{P}^1,\omega_{\mathbb{P}^1}^{\otimes 2}(S))
$$ 
is multiplication by a nonzero constant. 
After rescaling if necessary, we may assume that $f$ is the identity.

\medskip

Let  $(E, {\bf v})\in \mathcal{M}_{\cA_{F}}^s$ be a general rank two parabolic vector bundle, and write $\varphi(E, {\bf v}) = (E', {\bf v}')$. 
As explained in the beginning of the section, and in view of Proposition~\ref{NCbu}, in order to prove the theorem, it suffices to show that the projectivized nilpotent cones $\P(N_E)$ and $\P(N_{E'})$ are isomorphic over $\P^1$. 

From the above discussion, we have the following commutative diagram:
 \[
  \begin{tikzpicture}[xscale=2.1,yscale=-1.5]
    \node (A0_0) at (0, 0) {$T^{*}_{E}\mathcal{M}_{\cA_{F}}^s$};
    \node (A0_2) at (2, 0) {$T^{*}_{E'}\mathcal{M}_{\cA_{F}}^s$};
    \node (A1_1) at (1, 1) {${\rm H}^0(\mathbb{P}^1,\omega_{\mathbb{P}^1}^{\otimes 2}(S)).$};
    \path (A0_0) edge [->,swap]node [auto] {$\scriptstyle{h_{E}}$} (A1_1);
    \path (A0_2) edge [->]node [auto] {$\scriptstyle{h_{E'}}$} (A1_1);
    \path (A0_0) edge [->]node [auto] {$\scriptstyle{d\varphi}$} (A0_2);
  \end{tikzpicture}
  \]
If $n\geq 6$ then, by Lemma~\ref{globgen}, there are surjective maps of vector bundles on $\P^1$
$$
T^{*}_{E}\mathcal{M}_{\cA_F}^s\times \P^1
\ \stackrel{\alpha}{\to} \ \SPEnd_0\big(E,\omega_{\mathbb{P}^1}(S)\big), \text{ and } \\
$$
$$
T^{*}_{E'}\mathcal{M}_{\cA_F}^s\times \P^1
\ \stackrel{\alpha'}{\to} \ \SPEnd_0\big(E',\omega_{\mathbb{P}^1}(S)\big).
$$
By Remark~\ref{NE_from_h}, the induced isomorphism 
$$
d\varphi: T^{*}_{E}\mathcal{M}_{\cA_F}^s\times \P^1 \longrightarrow T^{*}_{E'}\mathcal{M}_{\cA_F}^s\times \P^1
$$ 
maps $\alpha^{-1}({N}_{E})$ to $(\alpha')^{-1}({N}_{E'})$, and the kernel of $\alpha$ to the kernel of $\alpha'$.
Therefore it yields an isomorphism 
$$
\SPEnd_0\big(E,\omega_{\mathbb{P}^1}(S)\big)\cong \SPEnd_0\big(E',\omega_{\mathbb{P}^1}(S)\big)
$$ over $\P^1$ mapping ${N}_{E}$ to ${N}_{E'}$. We conclude that $\P(N_E)$ and $\P(N_{E'})$ are isomorphic over $\P^1$, as desired. 

\medskip

For $n=5$  the result follows from \cite[Proposition 1.9]{AC17}. 
\end{proof}

%
%


\section{{Models $\mathcal M_{\cA}$ that are small modifications of $\mathcal M_{\cA_F}$}}\label{bir}

In this section we determine the automorphism group of moduli spaces $\mathcal M_{\cA}$ that are small modifications of 
$\mathcal M_{\cA_F}$. The weight polytope $\Pi\subset \Delta$ consisting of weights $\cA$ for which this happens can be described after 
\cite{Ba91} and \cite{Mu05}. We note that \cite{Ba91} and \cite{Mu05} consider  moduli spaces of  rank $2$ parabolic vector bundles on $\P^1$ 
of degree $1$, while here we work with degree $0$.
So, in order to describe the weight polytopes that are relevant to our setting, we perform a reflection on the corresponding polytopes described in \cite{Ba91}. This reflection corresponds to an elementary transformation centered at one parabolic point, as explained in Paragraph~\ref{el}.

\begin{say}{\bf The polytopes $\Delta$ and $\Pi$.}\label{polys}
The vertices of the hypercube $[0,1]^{n}\subset \R^{n}$ are the points of the form 
$\xi_I=\big((\xi_I)_1, \dots, (\xi_I)_{n}\big)$, where 
$I\subset \{1, \dots, n\}$, $(\xi_I)_i=1$ if $i\in I$, and  $(\xi_I)_i=0$ otherwise.
The parity of the subset $I$ and the vertex $\xi_I$ is the parity of $|I|$.
For each subset $I\subset \{1, \dots, n\}$, consider the degree one polynomial in the $\alpha_i$'s:
$$
H_I \ := \ \sum_{j\not\in I} \alpha_j + \sum_{i\in I}(1-\alpha_i).
$$ 
For any  subset $J\subset \{1, \dots, n\}$, we have:
\stepcounter{thm}
\begin{equation} \label{eq:H_I(x_J)}
H_I(\xi_J) \ = \ \# (I^{^c}\cap J) + \# (J^{^c}\cap I).
\end{equation} 

Let $\Delta$ be the polytope generated by the even vertices of the hypercube.
From \eqref{eq:H_I(x_J)} we see that  $\Delta$ is defined by the following set of inequalities: 
$$
\Delta \  = \  \left\{ 
\begin{aligned}
& 0\leq \alpha_i \leq 1, \ & i\in \{1, \dots, n\} \\
&H_I\geq 1 , \ & I\subset \{1, \dots, n\} \text{ odd.}
\end{aligned}
\right.
$$

From \eqref{eq:H_I(x_J)} we also see that, for any vertex $\xi_I\in \Delta$, the hyperplane  
spanned by those vertices of $\Delta$ that are adjacent to $\xi_I$ is $(H_I = 2)$. 
Hence, the polytope $\Pi\subset \Delta$ defined in the introduction can be defined by the following set of inequalities:
$$
\Pi \  = \ \Delta \ \cap \  \Big( \ H_I\geq 2 \ \big| \  I\subset \{1, \dots, n\} \text{ even } \Big).
$$

More generally, we define a wall-and-chamber decomposition on $\Delta$ as follows. 
For each subset $I\subset \{1, \dots, n\}$, and each integer $k$ satisfying $2\leq k\leq \frac{n}{2}$
and $|I|\equiv k \mod 2$, consider the hyperplane 
$(H_I =  k)$.
Now take the complement in the interior of $\Delta$ of the hyperplane arrangement
\stepcounter{thm}
\begin{equation} \label{eq:MCD_Delta}
  \Big(\ H_I \ = \  k \ \Big)_{\ 2\leq k\leq \frac{n}{2}, \ |I|\equiv k \mod 2}
\end{equation} 
and consider its decomposition into connected components. Each connected component is called a \emph{chamber} of $\Delta$.

In \cite{Ba91}, Bauer proved that this  wall-and-chamber decomposition on $\Delta$ corresponds to the variation of GIT for the moduli spaces $\mathcal{M}_{\cA}$, and described the birational maps between models corresponding to different chambers. 
In particular, for $\frac{1}{n-2}<\varepsilon<\frac{1}{n-4}$ and $\cA_{\epsilon}=(1-\epsilon,\epsilon,\dots,\varepsilon)$, the moduli space ${{\mathcal M}}_{\cA_\varepsilon}$ is 
isomorphic to the blow-up $X^{n-3}_n$ of $\mathbb{P}^{n-3}$ at $n$ general points. This is known to be a  \textit{Mori dream space} (\cite[Theorem 1.3]{CT06}). In particular its effective cone $\Eff(X^{n-3}_n)$ comes with a \emph{Mori chamber decomposition}, and the chambers inside the movable cone $\Mov(X^{n-3}_n)\subset \Eff(X^{n-3}_n)$ can be identified with the ample cones 
of small $\Q$-factorial modifications of $X^{n-3}_n$ (\cite{HK00}). 
These are $\Q$-factorial projective varieties which are isomorphic to $X^{n-3}_n$ outside a subset of codimension at least two. 
Mukai realized in \cite{Mu05} that there is a linear projection 
$$
\phi \  : \ \R^{n+1} \ \cong \  N^1(X^{n-3}_n)    \   \longrightarrow \ \R^{n} 
$$
mapping the effective $\Eff(X^{n-3}_n)$ onto $\Delta$, so that the wall-and-chamber decomposition of  $\Delta$ is induced by the Mori chamber decomposition of $\Eff(X^{n-3}_n)$. 
More precisely, for an arbitrary weight $\cA\in \Delta$, let $\cC_\cA$ denote the subset of $\Delta$ consisting of weight vectors defining the same 
stability condition as $\cA$. Then the relative interior of $\cC_\cA$ is the image under $\phi$ of the ample cone of the moduli space $\mathcal M_{\cA}$. 
In particular, $\phi$ maps the anti-canonical class $-K_{X^{n-3}_n}$ to the central weight $A_{F}= \left(\frac{1}{2},\dots,\frac{1}{2}\right)$, and the movable cone  $\Mov(X^{n-3}_n)$ is mapped onto the  polytope $\Pi$.
The linear projection $\phi$ was made explicit in \cite[Section 3]{AM16}.
\end{say}

\begin{proof}[{Proof of Corollary~\ref{allweights}}]
Let $\cA$ be a weight vector in the interior of the polytope $\Pi$.
Recall from the introduction that an elementary transformation $el_R\in \textbf{El}$ defines an automorphism of $\mathcal{M}_{\cA}^{s}$
if and only if it is $\cA$-admissible, i.e., $\cA^R\in \cC_\cA$.
In particular, $\textbf{El}_{\cA}\ \subset \ \Aut\big(\mathcal{M}_{\cA}\big)$. 

As explained above, $\mathcal M_{\cA}$ is a small modifications of  $\mathcal M_{\cA_F}$. 
So any automorphism $\varphi\in \Aut\big(\mathcal{M}_{\cA}\big)$ induces a pseudo-automorphism $\varphi_F$ of $\cM_{\cA_F}$. This means that $\varphi_F$ is a birational automorphism of $\cM_{\cA_F}$ that restricts to an isomorphism on the complement of a subset of codimension $\geq 2$. Since $\cM_{\cA_F}$ is a Fano variety,  every pseudo-automorphism of $\cM_{\cA_F}$ is in fact an automorphism, and hence $\varphi_F\in \textbf{El}$. 
We conclude that $\varphi$ is induced by an elementary transformation, $\varphi\in \textbf{El}_{\cA}$.
\end{proof}

\begin{Remark}
If $n\ge 6$, it follows from Corollary~\ref{allweights} that $\Aut(X_n^{n-3}) = \{Id\}$. In fact, taking $\cA_{\epsilon}$ with $\epsilon = \frac{1}{n-3}$ we can check that there is no $\cA$-admissible elementary transformation other than identity. Besides that, for appropriate choices of weights, there are small modifications $\mathcal M_{\cA}$ of $X_n^{n-3}$ having intermediate automorphism group.
\end{Remark}

%
%


\section{Moduli of involutional vector bundles}\label{hyp}

Let $n = 2g+2\geq 6$ be an even integer, and fix  $p_1, \dots, p_n\in \P^1$ general points.
Let $\pi: C \longrightarrow \P^1$ be the $2:1$ cover branched over the points $p_1, \dots, p_n$, set $c_i=\pi^{-1}(p_i)\in C$, and denote by $i:C \longrightarrow C$ the hyperelliptic involution. An involutional vector bundle $(E,j)$ on $C$ is an $i$-invariant vector bundle $E$, together with a lift $j: E \longrightarrow E$ of the involution $i$ to $E$.
We denote by ${\mathcal{M}}_{C/\P^1}^{inv}$ the moduli space of rank two semistable involutional vector bundles on $C$ with trivial determinant, and such that $\tr(j_{c_i})=0$ for $1 \leq i \leq n$ (see for instance \cite[Section 2]{Ab04}). 
Forgetting the lift $j: E \longrightarrow E$ yields a $2:1$ morphism 
$$
\pi\ : \ {\mathcal{M}}_{C/\P^1}^{inv} \ \longrightarrow \ \mathcal{S}
$$  
onto an irreducible component $\mathcal{S}$ of the moduli space of $i$-invariant rank two semistable vector bundles on $C$  with trivial determinant.

The Kummer variety of $C$ is 
$$\Kum(C) = \frac{\Jac(C)}{\iota},$$
where $\iota: \Jac(C)\longrightarrow\Jac(C)$ is the involution induced by $i: C \longrightarrow C$.
It naturally embeds in $\mathcal{S}$ via the map
$$
\begin{array}{ccc}
\Kum(C)& \longrightarrow & \mathcal{S}\\
   L & \longmapsto & L\oplus i^{*}L.
\end{array}
$$
By \cite[Theorem 2.1]{Ku00}, the double cover $\pi: {\mathcal{M}}_{C/\P^1}^{inv}  \longrightarrow  \mathcal{S}$ is branched over the Kummer variety $\Kum(C)\subset\mathcal{S}$. We denote by 
$$
\eta \ :\ {\mathcal{M}}_{C/\P^1}^{inv} \  \longrightarrow \ {\mathcal{M}}_{C/\P^1}^{inv}
$$
the involution of ${\mathcal{M}}_{C/\P^1}^{inv}$ induced by $\pi: {\mathcal{M}}_{C/\P^1}^{inv}  \longrightarrow  \mathcal{S}$.

As in the previous sections, we denote by $\cM_{\cA_{F}}$ the moduli space of rank two parabolic vector bundles with trivial determinant on $\big(\P^1, S\big)$ which are semistable with respect to the central weight $\cA_{F} = \left(\frac{1}{2},\dots, \frac{1}{2}\right)$.
By \cite[Proposition 1.2]{Bh84} there is an isomorphism 
\stepcounter{thm}
\begin{equation}\label{iso}
\cM_{\cA_F}\ \cong \ {\mathcal{M}}_{C/\P^1}^{inv}.
\end{equation}
This map is obtained by pulling back  parabolic vector bundles on $\P^1$ to $C$, performing an elementary transformation centered at the points $c_1,\dots,c_n$, and then twisting by an appropriate line bundle.

\begin{Proposition}\label{proInv}
Let the notation be as above. Then there is a splitting exact sequence
$$0 \ \rightarrow \  \frac{\mathbb{Z}}{2\mathbb{Z}} \cong \left\{ Id,\eta\right\} \ \rightarrow \  \Aut({\mathcal{M}}_{C/\P^1}^{inv}) \ 
\rightarrow \  \Aut(\mathcal{S},\Kum(C))\rightarrow 0,$$  
where $\Aut(\mathcal{S},\Kum(C))$ denotes the group of automorphisms of $\mathcal{S}$ stabilizing $\Kum(C)$.
\end{Proposition}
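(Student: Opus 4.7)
The plan is to build the homomorphism $\rho: \Aut({\mathcal{M}}_{C/\P^1}^{inv}) \to \Aut(\mathcal{S},\Kum(C))$ as descent through the double cover $\pi$, identify its kernel as $\{Id,\eta\}$, establish surjectivity, and then deduce the splitting from the group-theoretic structure of the total automorphism group. The starting point is the observation that combining the isomorphism (\ref{iso}) with Theorem~\ref{main_intro} yields
\[
\Aut({\mathcal{M}}_{C/\P^1}^{inv}) \ \cong \ \Aut(\cM_{\cA_F})\ \cong\ \textbf{El}\ \cong \ \left(\frac{\Z}{2\Z}\right)^{n-1},
\]
which is in particular abelian, so that $\eta$ is central in $\Aut({\mathcal{M}}_{C/\P^1}^{inv})$.

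With centrality of $\eta$ in hand, I would define $\rho(\varphi)$ as the automorphism induced by $\varphi \in \Aut({\mathcal{M}}_{C/\P^1}^{inv})$ on the quotient $\mathcal S = {\mathcal{M}}_{C/\P^1}^{inv}/\langle \eta\rangle$; this descent is valid precisely because $\varphi \circ \eta = \eta\circ \varphi$. The ramification divisor of $\pi$ coincides with the fixed locus of $\eta$ and is therefore preserved by $\varphi$, and its image in $\mathcal S$ is exactly $\Kum(C)$ by \cite[Theorem 2.1]{Ku00}; hence $\rho(\varphi)$ stabilizes $\Kum(C)$. An element $\varphi$ lies in $\ker\rho$ precisely when $\pi\circ \varphi = \pi$, i.e.\ when $\varphi$ preserves each fiber of $\pi$. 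Since $\pi$ is a connected degree $2$ cover, this forces $\varphi \in \{Id,\eta\}$.

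Surjectivity is the step I expect to require the most care. Given $\psi\in \Aut(\mathcal S,\Kum(C))$, the natural approach is to compare the two double covers $\pi$ and $\psi\circ\pi$ of $\mathcal S$, both branched along $\Kum(C)$: any isomorphism between them, as covers of $\mathcal S$, lifts $\psi$ to an automorphism of ${\mathcal{M}}_{C/\P^1}^{inv}$. Showing that such an isomorphism exists reduces to a uniqueness statement for the cyclic double cover of $\mathcal S$ determined by $\Kum(C)$, i.e.\ to controlling the relevant $2$-torsion class in $\Pic(\mathcal S)$. I would handle this either directly via the explicit description of $\pi$ in \cite{Ku00}, or by transporting the question to $\cM_{\cA_F}$ through (\ref{iso}) and using the modular interpretation of $\eta$ as a specific composition of elementary transformations on $C$ and its Jacobian.

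The splitting then comes for free from the abelian structure already used: $\Aut({\mathcal{M}}_{C/\P^1}^{inv}) \cong \left(\frac{\Z}{2\Z}\right)^{n-1}$ is an elementary abelian $2$-group, so the non-trivial element $\eta$ extends to an $\mathbb F_2$-basis, and the span of the remaining basis vectors provides a complement to $\{Id,\eta\}$ which is mapped isomorphically onto $\Aut(\mathcal{S},\Kum(C))$ by $\rho$. This complement furnishes the desired section, completing the proof of the splitting exact sequence.
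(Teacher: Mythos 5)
Your proof is correct, but it produces the descent homomorphism $\rho$ by a genuinely different mechanism than the paper. You obtain the fact that every $\varphi\in\Aut({\mathcal{M}}_{C/\P^1}^{inv})$ descends to $\mathcal{S}$ from the centrality of $\eta$, which you get by importing Theorem~\ref{main_intro} through the isomorphism \eqref{iso} to see that the whole automorphism group is elementary abelian; this is legitimate and non-circular, since Theorem~\ref{main_intro} is proved independently in Section~\ref{main_section}. The paper instead shows that $\pi$ is \emph{intrinsic}: using the small modification from the blow-up $X$ of $\P^{2g-1}$ and \cite[Theorem 2.1]{Ku00}, it identifies $\pi$ with the map given by the linear system $|\mathcal{L}'|$ where $2\mathcal{L}'\sim -K_{{\mathcal{M}}_{C/\P^1}^{inv}}$, so that every automorphism preserves the fibers of $\pi$ with no prior knowledge of the automorphism group. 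Your route is shorter but makes the proposition logically downstream of Theorem~\ref{main_intro}; the paper's route is self-contained, records the useful geometric fact that $\pi$ is the half-anticanonical map, and that same fact is what makes the lifting step transparent (any $\psi\in\Aut(\mathcal{S},\Kum(C))$ preserves the class $\mathcal{L}'$ because it preserves $-K$ and the square root is unique). On surjectivity you are, like the paper, brief, but you correctly isolate the point at issue — uniqueness of the square root of the class of $\Kum(C)$, i.e.\ absence of relevant $2$-torsion in $\Pic(\mathcal{S})$. Your kernel computation (deck group of the connected \'etale double cover over $\mathcal{S}\setminus\Kum(C)$, extended by density) matches the paper's "switch the two points on a general fiber" argument, and your splitting argument via an $\mathbb{F}_2$-basis is the natural justification of a point the paper leaves implicit.
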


\begin{proof}
Let $X$ be the blow-up of $\mathbb{P}^{2g-1}$ at $p_1, \dots, p_{2g+2}$, and denote by $E_i$ the exceptional divisor over $p_i$.
By \cite{Ba91} and \cite{Mu05}, there is a small birational modification 
$$
f\ :\ X \ \dashrightarrow \ {\mathcal{M}}_{\cA_{F}}\ \cong \ {\mathcal{M}}_{C/\P^1}^{inv} \ ,
$$
which is defined by the linear system $\big|m(-K_X)\big|$ for $m\gg 1$.

Let $\mathcal{L}$ be the linear system on $\mathbb{P}^{2g-1}$ of degree $g$ hypersurfaces  having multiplicity at least $g-1$ at the $p_i$'s,
and denote by $ \mathcal{L}_X$ the induced linear system on $X$. Then
\stepcounter{thm}
\begin{equation}\label{can}
\displaystyle{-K_{X} \sim 2gH-\sum_{i=1}^{2g+2}(2g-2)E_i\sim 2 (gH-\sum_{i=1}^{2g+2}(g-1)E_i )\sim 2  \mathcal{L}_X.}
\end{equation}
By \cite[Theorem 2.1]{Ku00} the rational map $f_{\mathcal{L}}$ induced by $\mathcal{L}$ is generically $2:1$, dominant onto $\mathcal{S}$, and makes the following diagram commute 
  \[
  \begin{tikzpicture}[xscale=3.5,yscale=-1.5]
    \node (A0_0) at (0, 0) {$X$};
    \node (A0_1) at (1, 0) {${\mathcal{M}}_{C/\P^1}^{inv}$};
    \node (A1_0) at (0, 1) {$\mathbb{P}^{2g-1}$};
    \node (A1_1) at (1, 1) {$\mathcal{S}.$};
    \path (A0_0) edge [->,dashed]node [auto] {$\scriptstyle{f}$} (A0_1);
    \path (A1_0) edge [->,dashed]node [auto] {$\scriptstyle{f_{\mathcal{L}}}$} (A1_1);
    \path (A0_1) edge [->]node [auto] {$\scriptstyle{\pi}$} (A1_1);
    \path (A0_0) edge [->]node [auto] {$\scriptstyle{}$} (A1_0);
  \end{tikzpicture}
  \]
Since  $f$ is a small birational modification, \eqref{can} implies that $\pi :  {\mathcal{M}}_{C/\P^1}^{inv}  \longrightarrow \mathcal{S}$ is 
defined by the linear system $|\mathcal{L'}|$, where $2\mathcal{L'}\sim -K_{{\mathcal{M}}_{C/\P^1}^{inv}}$. 
In particular, any automorphism of ${\mathcal{M}}_{C/\P^1}^{inv}$ preserves the fibers of $\pi$, and hence descends to an automorphism of $\mathcal{S}$ stabilizing $\Kum(C)$, the branch locus of $\pi$. This gives a group homomorphism
$$
\Aut({\mathcal{M}}_{C/\P^1}^{inv}) \ \longrightarrow \  \Aut(\mathcal{S},\Kum(C)).
$$
Any automorphism in $\Aut(\mathcal{S},\Kum(C))$  lifts to an automorphism of ${\mathcal{M}}_{C/\P^1}^{inv}$. 
Furthermore, if $\varphi\in \Aut({\mathcal{M}}_{C/\P^1}^{inv})$ is a nontrivial automorphism that descends to the identity, 
then $\varphi$ must switch the two points on a general fiber of $\pi$, i.e, $\varphi=\eta$, yielding the stated exact sequence.
\end{proof}

\begin{Remark} \label{aut(k,c)}
By Theorem~\ref{main_intro} and \eqref{iso}, ${\mathcal{M}}_{C/\P^1}^{inv}\cong  \left(\frac{\mathbb{Z}}{2\mathbb{Z}}\right)^{2g+1}$. Proposition~\ref{proInv} yields
$$
\Aut(\mathcal{S},\Kum(C))\cong \left(\frac{\mathbb{Z}}{2\mathbb{Z}}\right)^{2g}.
$$
On the other hand, tensoring by a $2$-torsion line bundle on $C$ induces an automorphism of ${\mathcal{M}}_{C/\P^1}^{inv}$.
Therefore, $\Aut(\mathcal{S},\Kum(C))$ can be naturally identified with the group of $2$-torsion points of $\Jac(C)$.  
\end{Remark}

\begin{Remark}
When $n=6$, the description of  ${\mathcal{M}}_{C/\P^1}^{inv}$ is classical. In this case, $C$ is a curve of genus $2$,  $\mathcal{S}\cong \P^3$, and $\Kum(C)\subset\mathbb{P}^3$ is the classical Kummer surface. It is a quartic surface whose singular locus consists of $16$ singular points of type $A_1$. Remark~\ref{aut(k,c)} above recovers the group of automorphisms of $\mathbb{P}^3$ stabilizing $\Kum(C)$: 
$$
\Aut(\mathbb{P}^3,\Kum(C))\cong \left (\frac{\mathbb{Z}}{2\mathbb{Z}}\right )^{4}.
$$
\end{Remark}

\begin{Remark}
When $n = 2g+1\geq 5$ is odd, there is a similar isomorphism as in \eqref{iso}. 
In this case, in addition to the parabolic points $p_1, \dots, p_n\in \P^1$, we pick an extra general point $p_{2g+2}\in\P^1$.
As before, we let $\pi: C \longrightarrow \P^1$ be the $2:1$ cover branched over the points $p_1, \dots, p_{2g+2}$, and set $c_i=\pi^{-1}(p_i)\in C$.
We denote by ${\mathcal{M}}_{C/\P^1}^{inv}$ the coarse moduli space of rank two semistable involutional vector bundles on $C$ with determinant $\mathcal{O}_C(c_{2g+2})$, such that $\tr(j_{c_i})=0$ for $1 \leq i \leq n$, and $j_{c_{2g+2}}=(-1)^{(g-1)}Id$. 
Then 
$$
{\mathcal{M}}_{\cA_F}\ \cong \ {\mathcal{M}}_{C/\P^1}^{inv}.
$$
By Theorem~\ref{main_intro}, their automorphism groups are isomorphic to $\left(\frac{\mathbb{Z}}{2\mathbb{Z}}\right)^{2g}$. 
As before, they can be naturally identified with the group of $2$-torsion points of $\Jac(C)$.
\end{Remark}

\bibliographystyle{amsalpha}
\bibliography{Biblio}
\end{document}